\def\square{\pst@object{square}}% reads star and options and continues with \square@i
\def\square@i(#1,#2)#3{{\use@par\solid@star\psframe[origin={#1,#2}](#3,#3)}}
\DeclareFontFamily{U}{tipa}{}
\DeclareFontShape{U}{tipa}{bx}{n}{<->tipabx10}{}
\newcommand{\arc@char}{{\usefont{U}{tipa}{bx}{n}\symbol{62}}}%
\newcommand{\arc}[1]{\mathpalette\arc@arc{#1}}
\newcommand{\arc@arc}[2]{%
  \sbox0{$\m@th#1#2$}%
  \vbox{
    \hbox{\resizebox{\wd0}{\height}{\arc@char}}
    \nointerlineskip
    \box0
  }%
}
\newcommand{\doublewedge}{\big@doubleop{\wedge}}
\newcommand{\big@doubleop}[1]{%
  \DOTSB\mathop{\mathpalette\big@doubleop@aux{#1}}\slimits@
}
\newcommand\big@doubleop@aux[2]{%
  \sbox\z@{$\m@th#1#2$}%
  \makebox[1.35\wd\z@][s]{$\m@th#1#2\hss#2$}%
}
\newcommand{\abs}[1]{\left|#1\right|}     %usage: \abs{x} yields |x|.
\newcommand{\near}{\delta} % Cech proximity
\newcommand{\dHtop}{\tau_H^{\Phi}} % descriptive proximal topology
\newcommand{\dnear}{\delta_{\Phi}} % descriptive proximity
\newcommand{\donear}{\delta_{\Phi_o}} % relaxed descriptive proximity
\renewcommand{\thesubfigure}{\thefigure.\arabic{subfigure}}
\renewcommand{\p@subfigure}{}
\renewcommand{\@thesubfigure}{\thesubfigure:\hskip\subfiglabelskip}
\theoremstyle{plain}
\newtheorem{theorem}{Theorem}
\newtheorem{lemma}{Lemma}
\newtheorem{remark}{Remark}
\newtheorem{definition}{Definition}
\newtheorem{example}{Example}
\newtheorem{corollary}{Corollary}
\newtheorem{axiom}{Axiom}
\begin{document}

\title{Indefinite Descriptive Proximities Inherent in Dynamical Systems. An Axiomatic Approach}

\author[J.F. Peters]{James Francis Peters}
\address{
Computational Intelligence Laboratory,
University of Manitoba, WPG, MB, R3T 5V6, Canada and
Department of Mathematics, Faculty of Arts and Sciences, Ad\.{i}yaman University, 02040 Ad\.{i}yaman, Turkiye, \url{https://orcid.org/https://0000-0002-1026-4638}.
}
\email{james.peters3@umanitoba.ca}
\thanks{The research has been supported by the Natural Sciences \&
Engineering Research Council of Canada (NSERC) discovery grant 185986 
and Instituto Nazionale di Alta Matematica (INdAM) Francesco Severi, Gruppo Nazionale per le Strutture Algebriche, Geometriche e Loro Applicazioni grant 9 920160 000362, n.prot U 2016/000036 and Scientific and Technological Research Council of Turkey (T\"{U}B\.{I}TAK) Scientific Human
Resources Development (BIDEB) under grant no: 2221-1059B211301223.}

\author[T. Vergili]{Tane Vergili}
\address{
Department of Mathematics, Karadeniz Technical University, Trabzon, Turkiye,
\url{https://orcid.org/0000-0003-1821-6697}.
}
\email{tane.vergili@ktu.edu.tr}

\author[F. Ucan]{Fatih Ucan}
\address{
	Department of Mathematics, Karadeniz Technical University, Trabzon, Turkiye, \url{https://orcid.org/0000-0002-6975-9408}.
}
\email{fatihucan@ktu.edu.tr}

\author[D. Vakeesan]{Divagar Vakeesan}
\address{
Computational Intelligence Laboratory,
University of Manitoba, WPG, MB, R3T 5V6, Canada, \url{https://orcid.org/0000-0001-6708-4585}.
}
\email{divagarv@myumanitoba.ca}

\subjclass[2020]{054E05 (Proximity); 54C50 (Topology)}

\date{}

\begin{abstract}
This paper introduces indefinite proximities inherent in the collection of physical objects found in a dynamical system.  Axiomatically, these indefinite proximities lead to a new form of Hausdorff topology, which is indefinite descriptively.  The main results in this paper are (1) Every descriptive proximity space on a dynamical system is indefinite (Theorem 1), (2) 
Every dynamical system has an indefinite descriptive Hausdorff topology (Theorem 3), and (3) The energy of a dynamical system varies with every clock tick (Theorem 4).
An application of these results is given in terms of the detection of those portions of a dynamical system that are stable and that have low energy dissipation.
\end{abstract}
%
%\keywords[2020 Mathematics Subject Classification]{054E05 (Proximity); 54C50 (Topology)}
\keywords{...}
\maketitle
\tableofcontents

\section{Introduction}
This paper introduces an axiomatic approach in the study of measurable descriptive proximities that are inherent in the self-similarities in the parts, behaviours and waveforms of complex dynamical system.  The consequences of this approach carry forward recent work on the descriptive approach in the study of dynamical systems~\cite{Sharkovsky2022,PetersLiyanage2024,ErdagPetersDeveci2024Dnear0}, especially chaotic dynamical systems~\cite{Edelman2018}. This approach leads to the introduction of a number of new descriptive proximity relations that represent advances in proximity space theory~\cite{Csaszar1987catProx,Naimpally2013,PetersVergili2021DescriptiveHomotopy} useful in the detection and measurement of those portions of dynamical systems that are stable and that have low energy dissipation.

All proximities between the parts and waveforms of dynamical systems are indefinite.  This observation leads to the introduction of an indefinite descriptive proximity $\delta_{lim\Phi}$, which is a refinement of the relaxed descriptive proximity $\donear$~\cite{ErdagPetersDeveci2024Dnear0}. We observe that every descriptive proximity space on a dynamical system is indefinte (Theorem~\ref{theorem:indefiniteDynamicalSys}). Important results stemming from the introduction of an indefinite descriptive distance $d^{lim\Phi}$ (Definition~\ref{def:dlim}) are given, namely, Indefinite Descriptive Hausdoff Topology (Lemma~\ref{lemma:indefiniteTop}) and every dynamical system has an indefinite Hausdorff topology (Theorem~\ref{theorem:indefiniteDescrHausdorffTop}). This paper also  includes an application of $\delta_{lim\Phi}$ in detecting as well as measuring the stability, low energy dissipation portions of dynamical system waveforms.

\begin{table}[h!]\label{table:symbols}
\caption{Principal Symbols Used in this Paper}
\begin{tabular}{|c|c|}
%\toprule
\hline
Symbol & Meaning\\ 
\hline
$d^{\Phi}$ & Descriptive proximity distance:~Def.~\ref{def:dnear}\\
\hline
$d^{\Phi}_{H}$ & Descriptive Hausdorff distance:~Def.~\ref{def:HausdorffDistance2}\\
\hline
$(\mathcal{K}_{\Phi},d_H^\Phi,\dHtop)$ & Descriptive Hausdorff Topological Space:~Def.~\ref{def:topology2}\\
\hline
$d^{lim\Phi}$ & Indefinite descriptive distance:~Def.~\ref{def:dlim}\\
\hline
$\left(X,f,\Phi\right)$ & Descriptive dynamical system:~Def.~\ref{def:DDS}\\
\hline
$Per(f, \Phi)$ & Descriptive  periodic points:~Def.~\ref{def:descriptivePeriodicPts}\\
\hline
$\donear$ & Relaxed descriptive proximity:~Def.~\ref{def:donear}\\
\hline
$E_{m(t)}$ & Waveform $m(t)$ energy:~Def.~\ref{axiom:waveformEnergy}\\
\hline
$E_{diss}(loc,t)$  & Energy dissipation at location $loc$ at time $t$:~Def.~\ref{def:Ediss}\\
\hline
\end{tabular}
\end{table}
\section{Preliminaries}
This section introduces descriptive proximities as well as descriptively proximal self-similarities in dynamical system behaviors.

Let $X$ be a nonempty set, $2^X$ denote the collection of subsets of $X$, and $A\in 2^X$ for a nonempty set $A$ with $n$ characteristics. 
A \textit{probe function} on $X$ is a mapping $\phi: 2^X \to \mathbb{R}$ and  a  characteristic of a subset $A$ is  $\phi(A) \subset \mathbb{R}$. In that case, a \textit{complete description} of $A$ with $n$ characteristics is a set $\Phi(A)$ where \boxed{\Phi: 2^X \to \mathbb{R}^n,n\geq 1} is a mapping defined by 
\begin{center}
\boxed{\Phi(A) =  \left\{ \left(\phi_1(a),\phi_2(a),\dots,\phi_n(a)\right)  \ \ : \  a\in A \right\} \subseteq \mathbb{R}^n.}
\end{center}

Notice that $\Phi(A)\cap\Phi(B)\neq \emptyset$ implies  there exist $a\in A$ and $b \in B$ such that $\Phi(\{a\})=\Phi(\{b\})$. Throughout the rest of the paper,  we will simply use $\Phi(a)$ instead of $\Phi(\{a\})$.
	
\begin{definition}\label{def:dnear}{\bf (Descriptively Proximal Sets~\cite{Naimpally2013}).}  
Let $X$ be a nonempty set and $A,B\in 2^X$.  
Consider the descriptive proximity mapping $d^{\Phi}: 2^X\times 2^X\to\mathbb{R}$ defined by
	\begin{center}
		\boxed{d^{\Phi}(A,B)=\inf_{\substack{a\in A \\ b\in B}} \abs{\Phi(a) - \Phi(b)} = r\in\mathbb{R}^{\geq 0}.
	}
	\end{center}
Then we say that $A,B$ are descriptively proximal, which is denoted by $A\dnear B$ provided 
 $d^{\Phi}(A,B) = 0$. 
	\qquad \textcolor{blue}{$\blacksquare$}
\end{definition}

\begin{figure}[!ht]
	\centering
	\includegraphics[width=80mm]{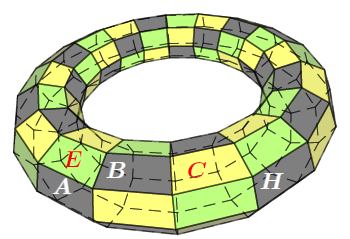}
	\caption{Torus near sets}
	\label{fig:torus}
\end{figure} 

\begin{example}
	Let \boxed{\phi(A) = k\ \mbox{nm (nanometers)}\in \mathbb{R}} be the description of a torus panel shown in Figure~\ref{fig:torus}, limited to a single characteristic, namely, a panel color wavelength in the visible spectrum.  Assume each gray panel has a wavelength = 304 nm.  Consider the rectangular panels $A,B,C,E,H$ on the surface of the torus in Figure~\ref{fig:torus}.
	Notice that 
	\begin{center}
		\boxed{\phi(A) = \phi(B) = \phi(H) = 304\ nm.}
	\end{center}
	In addition, we have the following descriptive proximities:
	\begin{compactenum}[1$^o$]
		\item \boxed{A\not{\dnear} E}   (panel $A$ is not descriptively near\ panel $E$)
		\item \boxed{A\not{\dnear} C}  (panel $A$  is not descriptively near  panel $C$)
		\item \boxed{A \dnear B}  (panel $A$ is  descriptively near  panel $B$)
		\item \boxed{A \dnear H} (panel $A$ is  descriptively near  panel $H$)
		\item \boxed{B \dnear H}  (panel $B$ is   descriptively near  panel $H$)
		\item \boxed{C \not{\dnear} H}  (panel $C$  is not descriptively near  panel $H$)
		\item \boxed{H \not{\dnear} E}  (panel $H$ is not descriptively near  panel $E$) 
		\item \boxed{A \dnear A}  (self-descriptive-proximity) \qquad \textcolor{blue}{$\blacksquare$}
	\end{compactenum}
\end{example}

\begin{remark}
Descriptions of dynamically changing systems such as observable ring water in each of the rings on the planet Saturn~\cite{SaturnRingRain2015} are usually incomplete, since the number of known characteristics is typically incomplete.  For example, in describing the proximities between self-similarities in a physical chaotic system represented by the collection of subsets $2^X$ for a set $X$ of system objects, the description of a subset $A\in 2^X$ would typically be incomplete.
This is consistent with the view that the characteristics of every physical object is indefinite (see Axiom~\ref{axiom:Physical}) and the descriptions of every pair of physical objects is not fixed (see Axiom~\ref{axiom:PhysicalPairs}).
\qquad \textcolor{blue}{$\blacksquare$}
\end{remark}

\begin{axiom}\label{axiom:Physical}
Let $\Phi(A)$ be a complete description of a nonempty set of physical objects $A$ and let $\abs{\Phi(A)}$ be the size of $\Phi(A)$.  The number of characteristics of $A$ is indefinite, i.e., 
\boxed{\abs{\Phi(A)} = k\in \mathbb{Z^+}}.
\end{axiom}

\begin{axiom}\label{axiom:PhysicalPairs}
For each pair of sets of physical objects $A$ and $B$, the difference between the descriptions is not fixed, i.e.,
%{\color{blue} 
\boxed{d^{\Phi}(A,B)= r\in \mathbb{R}^{\geq 0}, i.e.,\abs{\Phi(A)}-\abs{\Phi(B)} = r}. 
%}  
%{\color{red} Dear Professor James Peters, Is that exactly what you mean, or do you mean $\abs{\Phi(A)}-\abs{\Phi(B)}=r$?
%}
\end{axiom}

\begin{definition}\label{def:dnear2}{\bf (Descriptive Proximity Space ~\cite{Naimpally2013})}.\\
	For a nonempty set $X$, 
	\boxed{\left(2^X,\dnear\right)} is a descriptive proximity space. 
	\quad \textcolor{blue}{$\blacksquare$}
\end{definition}

\begin{definition}\label{def:HausdorffDistance}{\bf (Hausdorff Distance~\cite{Hausdorff1914,Hausdorff1914b})}.
The Hausdorf distance, $d_H(Q,S)$, between a pair of compact subsets $Q$ and $S$ in  $\mathbb{R}^m$ is defined by
\begin{center}
	\boxed{d_H(Q,S) = \max \left\{ \sup_{\substack{q\in Q}} D(q,S), \sup_{\substack{s\in S}} D(Q,s)\right\} \geq 0}
\end{center}
where $D(p,-)$ or $D(-,p)$ denote the distance between a single point $p$ and a given set. \textbf\qquad \textcolor{blue}{$\blacksquare$}
\end{definition}

%{\color{blue}
One can also measure the descriptive Hausdorff distance of $A$ and $B$ in $X$ by assuming that their complete descriptions $\Phi(A)$ and $\Phi(B)$ are compact subsets of $\mathbb{R}^n$.

\begin{definition}\label{def:HausdorffDistance2}{\bf (Descriptive Hausdorff Distance)}\\
Let $(X,\dnear)$ be a descriptive proximity space with a collection of $n$ characteristics, and $A,B\in 2^X$ with compact complete descriptions  $\Phi(A),\Phi(B)$ in $\mathbb{R}^m$.  The \textit{descriptive Hausdorff distance} $d^{\Phi}_H(A,B)$ between $A$ and $B$ is defined by
\begin{center}
\boxed{d^{\Phi}_H(A,B) = d_H(\Phi(A), \Phi(B))}
\end{center}
\textbf\qquad \textcolor{blue}{$\blacksquare$}
\end{definition}
%}

\begin{remark}
	We assume that
	\begin{center} 
		\boxed{d^{\Phi} (A,B) = d_H(\Phi(A),\Phi(B)) = r\in \mathbb{R}^{0+}}. 
	\end{center}
	for a set of known characteristics for $A,B\in 2^X$ for a set $X$.  In a dynamical system $X$ that is chaotic, $X$ is inherently self-symmetic.  
	\textbf\qquad \textcolor{blue}{$\blacksquare$}
\end{remark}

%{\color{blue}
Given a descriptive proximity space $(X,\dnear)$, let $\mathcal{K}_{\Phi}$ be a collection of all subsets of $X$ with compact complete descriptions
\[\mathcal{K}_{\Phi}=\{A \in 2^X \ : \   \Phi(A) \ \mbox{is compact} \}.\]
Then $d^{\Phi}_H$ defines a metric on $\mathcal{K}_{\Phi}$.
%}

%{\color{blue}
\begin{definition}\label{def:topology2}{\bf (Descriptive Hausdorff Topological Space)}.\\
A descriptive Hausdorff topology, $\dHtop$, induced by the descriptive Hausdorff metric $d_H^{\Phi}$  on $\mathcal{K}_{\Phi}$ has the following properties.
	\begin{compactenum}[1$^o$]
		\item $\mathcal{K}_{\Phi},\emptyset \in \dHtop$.
		\item $\{\mathcal{A}_i\}_{i\in I}  \subseteq \dHtop$ implies  $\bigcup_{i\in I} \mathcal{A}_i \in \dHtop$.
		\item $\mathcal{A}, \mathcal{B} \in \dHtop$ implies $\mathcal{A} \cap \mathcal{B} \in \dHtop$.
	\end{compactenum}
The triple \boxed{(\mathcal{K}_{\Phi},d_H^\Phi,\dHtop)} is called a descriptive Hausdorff topological space. 
	\qquad \textcolor{blue}{$\blacksquare$}
\end{definition}  
%}

%{\color{red} Dear Professor James Peters, I have changed the theorem below into a corollary. Is that ok with you?
%}
\begin{corollary}\label{theorem:fundamental}
	There is a descriptive Hausdorff topology $\dHtop$ on every collection of compact complete descriptions of a  descriptive proximity space $(X,\dnear)$.
\end{corollary}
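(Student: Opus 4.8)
The plan is to obtain the corollary directly from the fact recorded just above Definition~\ref{def:topology2} — namely that $d_H^{\Phi}$ is a metric on $\mathcal{K}_{\Phi}$ — together with the standard observation that every metric space carries a canonical Hausdorff topology generated by its open balls. So the first step is to fix a descriptive proximity space $(X,\dnear)$, form $\mathcal{K}_{\Phi}=\{A\in 2^X : \Phi(A)\ \mbox{is compact}\}$ (this is nonempty as soon as $X$ is, since each singleton $\{a\}$ has the one-point, hence compact, description $\Phi(a)$), and take as $\dHtop$ the family of all $\mathcal{U}\subseteq\mathcal{K}_{\Phi}$ that are unions of open balls $B(A,\varepsilon)=\{C\in\mathcal{K}_{\Phi} : d_H^{\Phi}(A,C)<\varepsilon\}$.

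Next I would verify the three closure properties of Definition~\ref{def:topology2}. Property $1^o$ is immediate: $\emptyset$ is the empty union, and $\mathcal{K}_{\Phi}=\bigcup_{A\in\mathcal{K}_{\Phi}}B(A,1)$. Property $2^o$ holds because an arbitrary union of unions of balls is again a union of balls. Property $3^o$ reduces, as usual, to the case of two basic balls: if $C\in B(A,\varepsilon)\cap B(A',\varepsilon')$, set $\rho=\min\{\varepsilon-d_H^{\Phi}(A,C),\ \varepsilon'-d_H^{\Phi}(A',C)\}>0$; the triangle inequality for the metric $d_H^{\Phi}$ then gives $B(C,\rho)\subseteq B(A,\varepsilon)\cap B(A',\varepsilon')$, so the intersection is open. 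Hence $\dHtop$ satisfies Definition~\ref{def:topology2} and $(\mathcal{K}_{\Phi},d_H^{\Phi},\dHtop)$ is a descriptive Hausdorff topological space.

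Finally I would record why the topology earns the name \emph{Hausdorff}: given distinct $A,C\in\mathcal{K}_{\Phi}$, the metric property of $d_H^{\Phi}$ yields $r:=d_H^{\Phi}(A,C)>0$, and $B(A,r/2)$, $B(C,r/2)$ are disjoint $\dHtop$-open neighbourhoods of $A$ and $C$ — disjointness is again the triangle inequality. Since $(X,\dnear)$ was arbitrary, this exhibits such a $\dHtop$ on the collection of compact complete descriptions of every descriptive proximity space, which is the assertion of the corollary.

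The one point that needs care — and the step I would flag as the main obstacle — is the passage from ``$d_H^{\Phi}$ is a metric'' to genuine point separation: since $d_H^{\Phi}(A,C)=d_H(\Phi(A),\Phi(C))$ vanishes precisely when $\Phi(A)=\Phi(C)$, the descriptive Hausdorff distance is only a pseudometric on all of $2^X$, so ``distinct'' must be read as ``having distinct descriptions'' (equivalently, one works on the quotient of $\mathcal{K}_{\Phi}$ by the relation $\Phi(A)=\Phi(C)$). This convention is already implicit in the sentence declaring $d_H^{\Phi}$ a metric on $\mathcal{K}_{\Phi}$; once it is in force, the remainder is the routine ``metric $\Rightarrow$ topology'' argument above.
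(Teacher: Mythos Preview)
Your proposal is correct and matches the paper's (implicit) approach: the corollary is stated in the paper without proof, immediately after the observation that $d_H^{\Phi}$ defines a metric on $\mathcal{K}_{\Phi}$ and Definition~\ref{def:topology2}, so the intended argument is precisely the standard ``metric induces a (Hausdorff) topology via open balls'' reasoning you spell out. Your flag about the pseudometric issue is apt and goes beyond what the paper makes explicit, but it does not change the route.
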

%\begin{proof}
%	Replace  $2^{\Phi(X)}$ (collection of descriptions) in Def.~\ref{def:topology2} with $2^{\dnear(X\times X)}$ (collection of near sets) and desired result follows.
%\end{proof} 

\begin{remark}
From Axiom~\ref{axiom:Physical}, the number of characteristics of a physical objects is indefinite.  For this reason, we introduce a new form of descriptive proximity of a pair of sets of physical objects $A,B$ in terms of the difference of the descriptions of $A$ and $B$ converging to 0 in the limit (see Definition~\ref{def:dlim}).
\qquad \textcolor{blue}{$\blacksquare$}
\end{remark}

\begin{definition}\label{def:dlim}{\bf (Indefinite Descriptive Distance)}.\\
Let $X$ be a nonempty set.  $A,B\in 2^X$ are indefinitely descriptively near sets of physical objects, 
%{\color{red} denoted by $A \near_{\lim\Phi} B$} 
provided 
\begin{center}
\boxed{d^{\lim \Phi}(A,B)=\lim \limits_{\abs{\Phi(A)-\Phi(B)}\to 0} d^{\Phi}(A,B) = 0}\qquad \textcolor{blue}{$\blacksquare$}
\end{center}
\end{definition}

\begin{example}
Assume that the torus in Figure~\ref{fig:torus} is one of the rings of Saturn and that the rectangular plates in the torus Figure~\ref{fig:torus} are sets of particles (e.g., ring containing charged water molecules~\cite{SaturnRingRain2015}) $A,B,C,E,H$ swept along within a Saturn ring. Then observe
\begin{compactenum}[1$^o$]
\item Each set of ring water molecules has an indefinite number of characteristics (from Axiom~\ref{axiom:Physical}).
\item Each pair of sets of ring water molecules are descriptively indefinite near sets (from Axiom~\ref{axiom:PhysicalPairs}).  This observation makes it possible to organize the descriptions of ring water sets in a concise way.
%\item Let $X$ be a set of ring water particles and let $2^X$ be a collection of subsets of ring particles.  Let a descriptive Hausdorff proximity space $(2^X\times 2^X,\dnear)$ be equipped with the Hausdorff metric $d_H$.
%From Lemma~\ref{lemma:locallyCompact}, the descriptive Hausdorff topological space $\dtop$ on this proximity space is locally compact.  This observation paves the way for the introduction of a variety of different open covers on $\dtop$, which would be a source of useful descriptions of Saturn ring water regions. 
\end{compactenum}
\end{example}

\begin{figure}[!ht]
	\centering
	\includegraphics[width=100mm]{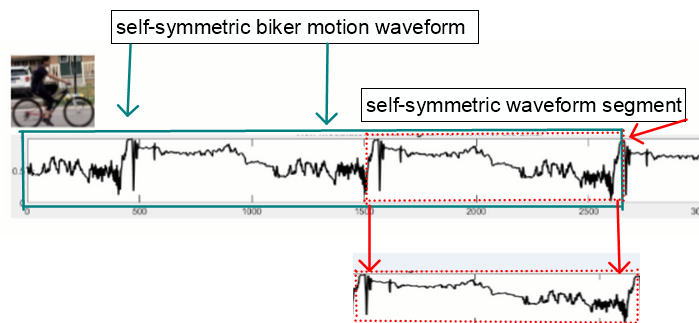}
	\caption{Self-similar biker motion waveform}
	\label{fig:biker}
\end{figure} 

%\begin{figure}[!ht]
	%\centering
	%\includegraphics[width=80mm]{self-similar-waveform}
	%\caption{Sample self-similar waveform}
	%\label{fig:waveform}
%\end{figure}

%Guozhen2005	
%SaturnRingRain2015

\section{Chaotic dynamical system}
Dynamical chaos is different from randomness or disorder.  Chaos is deterministic in descriptive set theory~\cite{Sharkovsky2022}.   For example, vibrations of a mechanical system are quasi-periodic, changing from period doubling to chaos with period doubling~\cite{WeiChaos2017}.  Chaos in dynamics occurs if the cloud of representative points in the course of evolution in the phase space undergoes repetitive deformations of stretching, folding, and transversal compression~\cite{KuznetsovHyperbolicChaos2012}.   Typically, nonlinear motion cascades to chaos~\cite{ZhaoNonlinearDynamics2024}.

\begin{definition}\label{def:system}{\bf(System)}.\\
A system is a configuration of components functioning together as a whole and [in] their relationships.
\qquad \textcolor{blue}{$\blacksquare$}
\end{definition}

\begin{definition}\label{def:dynamicalSystem}{\bf(Dynamical System)}.\\
A dynamical system is a physical system (collection of physical objects) that changes over time. 
\qquad \textcolor{blue}{$\blacksquare$}
\end{definition}

\begin{definition}\label{def:chaoticSyStem}{\bf(Chaotic Dynamical System)}.\\
A chaotic dynamical system (CDS) is a system that has the following properties:
\begin{compactenum}[1$^o$]
\item The waveform of a CDS is self-similar, i.e., the CDS contains parts with the same structure as the complete CDS.
\item The self-similar fractals in the CDS satisfy the open set condition~\cite{Edelman2018}, i.e., there exists an integer $n$ so that for every piece $A_i\in CDS$ with diameter $\varepsilon$, there are at most incomparable pieces $A_{j\leq n}\in CDS$ with diameter $\geq \varepsilon$ with distance < $\varepsilon$ from $A_i$~\cite{Schief1994}.  Let $X\in 2^Y$ in a CDS with subsets $2^X$ in a descriptive Hausdorff proximity space \boxed{(\mathcal{K}^Y_{\Phi}, d_H^{\Phi}, \dHtop)}. 
A description $\Phi(X)$ of a CDS $X$ in a space $Y$ contains the description of all the subsets of $A\in 2^X$ such that $d_H(A,B)<\varepsilon$ for all $B\in 2^Y$.   
\item The number of known charactertics of the CDS is indefinite.
\end{compactenum}
\qquad \textcolor{blue}{$\blacksquare$}
\end{definition}

\begin{example}
A vibrating syswtem is an example of CDS.  For example, in Figure~\ref{fig:biker} a biker is a CDS with the following properties:
\begin{compactenum}[1$^o$]
\item The biker waveform in Figure~\ref{fig:biker} is self-similar, i.e., the complete motion waveform pattern is repeated in its segments.
\item The biker waveform satisfies the open set condition.  To see this, 
let $X\in 2^Y$ in a biker motion system with subsets $2^X$ in a collection $Y$ of moving systems (e.g., walkers and vehicles).  This biker CDS resides in a descriptive Hausdorff proximity space $\mathcal{K}^X_{\Phi}$ that is a subspace of $\mathcal{K}^Y_{\Phi}$.  Let $B\in 2^Y$.
A description $\Phi(A)$ of a subset $A\in 2^X$ in space $Y$ contains the description of all the subsets of $A\in 2^X$ such that $d_H(A,B)<\varepsilon$ for all $B\in 2^Y$.
\item The number of known characteristics of the biker system is indefinite.
\end{compactenum}
\qquad \textcolor{blue}{$\blacksquare$}
\end{example}

\begin{lemma}\label{lemma:indefiniteDescriptively}
Every descriptive proximity space on a collection of physical objects is indefinite.
\end{lemma}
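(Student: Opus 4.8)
The plan is to read the conclusion off directly from Axioms~\ref{axiom:Physical} and~\ref{axiom:PhysicalPairs}, once the notion of an \emph{indefinite} descriptive proximity space is unwound. Recall (Definition~\ref{def:dnear2}) that a descriptive proximity space on a set $X$ is the pair $\left(2^X,\dnear\right)$, where the relation $\dnear$ is wholly determined by the probe-based descriptions $\Phi(A)$ of the members $A\in 2^X$ via $A\dnear B$ iff $d^{\Phi}(A,B)=0$ (Definition~\ref{def:dnear}). So to call the space indefinite it suffices to establish two things: (i) the complete descriptions $\Phi(A)$ attached to the members of the space are indefinite, and (ii) the pairwise descriptive distances $d^{\Phi}(A,B)$, and hence the truth values of the relation $A\dnear B$, are indefinite.

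First I would fix an arbitrary nonempty $A\in 2^X$. Since $X$ is a collection of physical objects and $A\subseteq X$, the set $A$ is itself a nonempty set of physical objects, so Axiom~\ref{axiom:Physical} applies verbatim to $A$ and yields $\abs{\Phi(A)}=k\in\mathbb{Z}^{+}$ with $k$ not fixed. Thus the number of characteristics recorded in the complete description $\Phi(A)$ is indefinite, which gives~(i) for every member of the space simultaneously.

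Next, for an arbitrary pair $A,B\in 2^X$ --- again a pair of sets of physical objects --- Axiom~\ref{axiom:PhysicalPairs} gives $d^{\Phi}(A,B)=r\in\mathbb{R}^{\geq 0}$ with $r$ not fixed. In particular the defining condition $d^{\Phi}(A,B)=0$ for $A\dnear B$ cannot be pinned down a priori, so the relation $\dnear$ on $2^X$ is indefinite, which is~(ii). Combining~(i) and~(ii), both the constituent descriptions and the proximity relation of $\left(2^X,\dnear\right)$ are indefinite, so the descriptive proximity space on $X$ is indefinite, as claimed.

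I do not expect a computational obstacle; the only real work is conceptual, namely committing to a precise reading of ``indefinite'' for a proximity space --- indefiniteness of the member descriptions together with indefiniteness of pairwise descriptive nearness --- under which the two axioms become directly applicable. Once that reading is fixed the lemma is immediate, and it is precisely this lemma, specialized to the situation in which the collection of physical objects is a (descriptive) dynamical system, that upgrades to Theorem~\ref{theorem:indefiniteDynamicalSys}.
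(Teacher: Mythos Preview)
Your argument is coherent on its own terms, but it diverges from the paper in exactly the place you flag as the only real work: the reading of ``indefinite.'' The paper does not treat ``indefinite'' as the conjunction of your items~(i) and~(ii). Instead, its proof invokes Axiom~\ref{axiom:Physical} and then Definition~\ref{def:dlim} (the indefinite descriptive distance $d^{\lim\Phi}$) to conclude $d^{\Phi}(A,B)=d^{\lim\Phi}(A,B)$ and hence $(2^X,d^{\Phi})=(2^X,d^{\lim\Phi})$; in other words, for the paper, a descriptive proximity space is \emph{indefinite} precisely when it coincides with the one built from $d^{\lim\Phi}$. Your proof bypasses Definition~\ref{def:dlim} entirely and substitutes Axiom~\ref{axiom:PhysicalPairs} to argue that pairwise distances are ``not fixed.'' That yields a reasonable informal notion of indefiniteness, but not the identification $(2^X,d^{\Phi})=(2^X,d^{\lim\Phi})$ that the paper actually establishes and then reuses (e.g., in Lemma~\ref{lemma:indefiniteTop}, where the space $(X,\near_{\lim\Phi})$ appears explicitly).

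What each approach buys: the paper's route makes ``indefinite'' a technical label tied to the limiting distance $d^{\lim\Phi}$, so downstream statements about indefinite Hausdorff topologies can be phrased in terms of $\near_{\lim\Phi}$. Your route is more self-contained axiomatically (Axioms~\ref{axiom:Physical} and~\ref{axiom:PhysicalPairs} alone) but leaves the connection to $d^{\lim\Phi}$ unmade. If you want to align with the paper, replace your step~(ii) by the observation that, under Axiom~\ref{axiom:Physical}, the descriptive distance is computed in the limit as $\abs{\Phi(A)-\Phi(B)}\to 0$, so $d^{\Phi}=d^{\lim\Phi}$ by Definition~\ref{def:dlim}.
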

\begin{proof}
Let $(X,\dnear)$ be a descriptive proximity space on a collection of physical objects in $2^X$.
From Axiom~\ref{axiom:Physical}, the number of known characteristics of subsets $A,B\in 2^X$ is indefinite.  Then, from Definition~\ref{def:dlim}, 
 $d^{\Phi}(A,B) = d^{\lim\Phi}(A,B)$.  Hence, $(2^X,d^{\Phi}) = (2^X,d^{\lim\Phi})$ is indefinite.
\end{proof}

\begin{theorem}\label{theorem:indefiniteDynamicalSys} Every descriptive proximity space on a dynamical system is indefinite.
\end{theorem}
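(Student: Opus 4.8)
The plan is to obtain Theorem~\ref{theorem:indefiniteDynamicalSys} as an immediate specialization of Lemma~\ref{lemma:indefiniteDescriptively}. The key observation is structural rather than computational: by Definition~\ref{def:dynamicalSystem} a dynamical system \emph{is} a physical system, i.e., a collection of physical objects that in addition evolves in time. Consequently, any descriptive proximity space built on a dynamical system is in particular a descriptive proximity space on a collection of physical objects, so the hypotheses of Lemma~\ref{lemma:indefiniteDescriptively} are satisfied and its conclusion transfers verbatim. The main content of the proof is therefore to make this inclusion of classes explicit.

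Concretely, first I would let $(X,\dnear)$ be an arbitrary descriptive proximity space on a dynamical system, so that $X$ is a set of physical objects with subsets $2^X$. Second, I would invoke Axiom~\ref{axiom:Physical} to conclude that for every $A\in 2^X$ the size $\abs{\Phi(A)}$ of its complete description is an indefinite positive integer. Third, I would invoke Axiom~\ref{axiom:PhysicalPairs} so that for every pair $A,B\in 2^X$ the descriptive distance $d^{\Phi}(A,B)$ is not fixed. Fourth, by Definition~\ref{def:dlim} these two facts identify $d^{\Phi}(A,B)$ with $d^{\lim\Phi}(A,B)$, whence $(2^X,d^{\Phi}) = (2^X,d^{\lim\Phi})$, which is precisely the assertion that the space is indefinite.

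The one clause deserving its own remark is ``changes over time'' in Definition~\ref{def:dynamicalSystem}: I would note that temporal evolution does not weaken the argument but reinforces it, since a time-indexed family of descriptions is \emph{a fortiori} an indefinite one in the sense of Axiom~\ref{axiom:PhysicalPairs}. I do not anticipate a genuine obstacle. The only thing to be careful about is that the statement is genuinely a corollary of Lemma~\ref{lemma:indefiniteDescriptively} rather than a separate induction or limit estimate; once the subclass relationship ``dynamical systems $\subseteq$ collections of physical objects'' is recorded, Lemma~\ref{lemma:indefiniteDescriptively} closes the argument with nothing further to prove.
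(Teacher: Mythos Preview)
Your proposal is correct and follows essentially the same route as the paper: the paper's proof reads ``Immediate from Axiom~\ref{axiom:Physical} and Lemma~\ref{lemma:indefiniteDescriptively},'' which is exactly your reduction via the observation (from Definition~\ref{def:dynamicalSystem}) that a dynamical system is a collection of physical objects. Your additional invocation of Axiom~\ref{axiom:PhysicalPairs} and the remark on temporal evolution are harmless elaborations but not needed for the argument as the paper records it.
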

\begin{proof}
Immediate from Axiom~\ref{axiom:Physical} and Lemma~\ref{lemma:indefiniteDescriptively}.
\end{proof}

\begin{theorem}\label{theorem:indefiniteDescriptively}
Every descriptive proximity space on a chaotic dynamical system is indefinite descriptively.
\end{theorem}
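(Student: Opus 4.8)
The plan is to reduce Theorem~\ref{theorem:indefiniteDescriptively} to the results already established, since a chaotic dynamical system is in particular a dynamical system. First I would observe that, by Definition~\ref{def:chaoticSyStem}, a chaotic dynamical system $(X,f,\Phi)$ is a dynamical system (Definition~\ref{def:dynamicalSystem}), namely a physical system that changes over time, carrying the additional self-similarity and open-set-condition structure. Consequently, any descriptive proximity space $(2^X,\dnear)$ attached to a CDS is a descriptive proximity space on a dynamical system, so Theorem~\ref{theorem:indefiniteDynamicalSys} applies directly and yields that $(2^X,d^{\Phi})$ is indefinite.

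Next I would make explicit why ``indefinite'' upgrades to ``indefinite descriptively'' in the chaotic case. Invoking Axiom~\ref{axiom:Physical}, the number of known characteristics $\abs{\Phi(A)}=k\in\mathbb{Z^+}$ of each subset $A\in 2^X$ of system objects is indefinite, and by clause $3^o$ of Definition~\ref{def:chaoticSyStem} the number of known characteristics of the CDS itself is indefinite. Hence for every pair $A,B\in 2^X$ we may pass to the limit $\abs{\Phi(A)-\Phi(B)}\to 0$ as in Definition~\ref{def:dlim}, obtaining $d^{\Phi}(A,B)=d^{\lim\Phi}(A,B)$, exactly as in the proof of Lemma~\ref{lemma:indefiniteDescriptively}. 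Because this equality holds through the descriptions $\Phi$ rather than through the underlying points of $X$, the indefiniteness is descriptive: it is the descriptive proximity relation $\dnear$, carried by $\Phi$, that coincides with its indefinite refinement $\delta_{lim\Phi}$.

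Finally I would assemble these observations: $(2^X,\dnear)=(2^X,d^{\Phi})=(2^X,d^{\lim\Phi})$ as descriptive proximity structures on the CDS, which is precisely the assertion that every descriptive proximity space on a chaotic dynamical system is indefinite descriptively. I would also note, for completeness, that the self-similarity clause $1^o$ of Definition~\ref{def:chaoticSyStem} is what makes the descriptive identification $\Phi(\{a\})=\Phi(\{b\})$ genuinely nonvacuous on a CDS, since self-similar parts share structure and hence descriptions, reinforcing that the indefinite refinement is the natural proximity in this setting.

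The main obstacle is not a technical one but a matter of precision in the statement: ``indefinite descriptively'' is a slightly stronger-sounding phrase than the plain ``indefinite'' of Theorem~\ref{theorem:indefiniteDynamicalSys}, so the step that needs care is articulating that the coincidence $d^{\Phi}=d^{\lim\Phi}$ takes place at the level of the descriptive proximity $\dnear$ induced by $\Phi$ on $2^X$, and that the CDS hypotheses (especially clause $3^o$) guarantee the limit in Definition~\ref{def:dlim} is the appropriate one. Everything else is an immediate application of Axiom~\ref{axiom:Physical}, Definition~\ref{def:dlim}, Lemma~\ref{lemma:indefiniteDescriptively}, and Theorem~\ref{theorem:indefiniteDynamicalSys}.
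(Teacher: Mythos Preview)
Your proposal is correct and follows essentially the same approach as the paper, which proves the theorem in one line as ``Immediate from Definition~\ref{def:chaoticSyStem} and Theorem~\ref{theorem:indefiniteDynamicalSys}.'' Your additional discussion unpacking why the indefiniteness is \emph{descriptive} (via clause~$3^o$ of Definition~\ref{def:chaoticSyStem} and Definition~\ref{def:dlim}) is a reasonable elaboration, but the underlying reduction---a CDS is a dynamical system, so Theorem~\ref{theorem:indefiniteDynamicalSys} applies---is exactly the paper's argument.
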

\begin{proof}
Immediate from Definition~\ref{def:chaoticSyStem} and Theorem~\ref{theorem:indefiniteDynamicalSys}.
\end{proof}

\begin{lemma}\label{lemma:indefiniteTop}{\bf(Indefinite descriptive Hausdorff Topology)}.\\
Every descriptive Hausdorff topology on a sets of objects in a physical system is indefinite.
\end{lemma}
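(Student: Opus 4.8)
The plan is to follow the same template used for Lemma~\ref{lemma:indefiniteDescriptively} and Theorem~\ref{theorem:indefiniteDynamicalSys}, namely to transport the indefiniteness of the underlying descriptive proximity upward through the descriptive Hausdorff metric to the topology it generates. First I would fix a physical system $X$, form the collection $\mathcal{K}_\Phi$ of subsets of $X$ with compact complete descriptions, and recall from Corollary~\ref{theorem:fundamental} and Definition~\ref{def:topology2} that the descriptive Hausdorff topology $\dHtop$ on $\mathcal{K}_\Phi$ is precisely the topology generated by the open balls of the descriptive Hausdorff metric $d_H^\Phi$, giving the space $(\mathcal{K}_\Phi, d_H^\Phi, \dHtop)$. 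Since a set of objects in a physical system is a collection of physical objects, Lemma~\ref{lemma:indefiniteDescriptively} already applies to the proximity $\dnear$ underlying this space.

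Next I would invoke Axiom~\ref{axiom:Physical}: for each $A \in \mathcal{K}_\Phi$ the size $\abs{\Phi(A)} = k \in \mathbb{Z}^+$ is indefinite. Using the identification $d_H^\Phi(A,B) = d_H(\Phi(A),\Phi(B))$ of Definition~\ref{def:HausdorffDistance2} together with the limiting convention of Definition~\ref{def:dlim} applied to $d_H^\Phi$ in place of $d^\Phi$, I would conclude $d_H^\Phi(A,B) = \lim_{\abs{\Phi(A)-\Phi(B)}\to 0} d_H^\Phi(A,B) = d_H^{\lim\Phi}(A,B)$. This is the exact Hausdorff analogue of the identity $d^\Phi = d^{\lim\Phi}$ established in the proof of Lemma~\ref{lemma:indefiniteDescriptively}, and it says that the metric generating $\dHtop$ is itself an indefinite descriptive Hausdorff metric.

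Finally I would close the loop by observing that every ingredient of $\dHtop$ listed in properties $1^o$--$3^o$ of Definition~\ref{def:topology2} is built from the $d_H^\Phi$-balls, so as the number of known characteristics varies the metric $d_H^\Phi$ varies, the basic open sets vary, and hence $\dHtop$ is not fixed; that is, $(\mathcal{K}_\Phi, d_H^\Phi, \dHtop)$ is indefinite. The main obstacle I anticipate is not computational but definitional: making precise what \emph{indefinite} should mean for a topology, as opposed to for a metric or a proximity, and in particular handling the fact that distinct metrics can a priori induce the same topology. The cleanest way around this is to keep \textbf{indefinite} anchored to the indefinite quantity $\abs{\Phi(A)}$ supplied by Axiom~\ref{axiom:Physical} and to note that $d_H^\Phi$, its balls, and therefore $\dHtop$ all factor through that quantity, so the indefiniteness propagates formally with no additional hypotheses, exactly as in Theorem~\ref{theorem:indefiniteDynamicalSys}.
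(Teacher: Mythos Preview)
Your argument is sound at the level of rigor the paper operates at, and its overall shape---push the indefiniteness of $\Phi$ through the metric $d_H^\Phi$ and then through the topology it generates---matches the paper's intent. The one substantive difference is the choice of axiom: you anchor everything to Axiom~\ref{axiom:Physical} (the number of characteristics $\abs{\Phi(A)}$ is indefinite) and then mimic the $d^\Phi = d^{\lim\Phi}$ step from Lemma~\ref{lemma:indefiniteDescriptively}, whereas the paper's own proof invokes Axiom~\ref{axiom:PhysicalPairs} instead, reading it as saying that the Hausdorff distances $d_H(\Phi(A),\Phi(B))$ themselves ``approach zero'' and are therefore indefinite, and then substitutes this into Definition~\ref{def:topology2}. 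Your route is arguably cleaner because it reuses Lemma~\ref{lemma:indefiniteDescriptively} verbatim at the Hausdorff level; the paper's route is shorter because Axiom~\ref{axiom:PhysicalPairs} speaks directly about pairs and hence about distances, so no limiting identity needs to be rederived. Neither approach resolves the definitional worry you flag---what it means for a \emph{topology} to be indefinite---any more precisely than the other; both simply declare it by inheritance from the metric.
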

%{\color{red} Dear Professor James Peters, I have changed some parts that are up in the text, and now with these changes  I can't understand the proof down below. Would you please reconsider editing this proof if everything up here seems to be correct to you?}

\begin{proof}
Let $2^X$ be collections of subsets of a dynamical system $X$, $A,B\in 2^X$.  Let $(X,\near_{\lim\Phi})$ be an indefinite descriptive proximity space. From Axiom~\ref{axiom:PhysicalPairs}, all descriptive Hausdorff distances \boxed{d_H(\Phi(A),\Phi(B))} approach zero, i.e., descriptive Hausorff proximities between sets of physical objects in a dynamical system are indefinite.  By replacing $2^X$ in Definition~\ref{def:topology2}, and obtain an  indefinite descriptive Hausdorff topology.
\end{proof}

\begin{theorem}~\label{theorem:indefiniteDescrHausdorffTop}
Every Dynamical System has an indefinite descriptive Hausdorff topology.
\end{theorem}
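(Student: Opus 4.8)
The plan is to obtain Theorem~\ref{theorem:indefiniteDescrHausdorffTop} by chaining the constructions and results already established, since a dynamical system is in particular a physical system (Definition~\ref{def:dynamicalSystem}). First I would fix an arbitrary dynamical system $X$ and pass to its collection of subsets $2^X$, forming the descriptive proximity space $(2^X,\dnear)$ of Definition~\ref{def:dnear2} via the descriptive proximity distance $d^{\Phi}$ of Definition~\ref{def:dnear}.

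Next I would restrict attention to the subcollection $\mathcal{K}_{\Phi}=\{A\in 2^X:\Phi(A)\text{ is compact}\}$, on which $d_H^{\Phi}$ is a metric, and invoke Corollary~\ref{theorem:fundamental} (equivalently, Definition~\ref{def:topology2}): the descriptive Hausdorff metric $d_H^{\Phi}$ induces a descriptive Hausdorff topology $\dHtop$, so that $(\mathcal{K}_{\Phi},d_H^{\Phi},\dHtop)$ is the descriptive Hausdorff topological space naturally attached to $X$. This already produces a descriptive Hausdorff topology on $X$; it then remains only to see that it is indefinite.

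For the final step I would feed in Theorem~\ref{theorem:indefiniteDynamicalSys}: since $X$ is a dynamical system, its descriptive proximity space is indefinite, so by Definition~\ref{def:dlim} the descriptive proximity distance coincides with the indefinite descriptive distance, $d^{\Phi}=d^{\lim\Phi}$, on $2^X$. Consequently every descriptive Hausdorff distance $d_H(\Phi(A),\Phi(B))$ is the indefinite one, which is exactly the hypothesis used in the proof of Lemma~\ref{lemma:indefiniteTop}, drawing on Axiom~\ref{axiom:Physical} and Axiom~\ref{axiom:PhysicalPairs}. Applying Lemma~\ref{lemma:indefiniteTop} to $\mathcal{K}_{\Phi}$ then yields that $\dHtop$ is indefinite, which is the assertion.

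I expect the main obstacle to be bookkeeping rather than substance: one must check that $\mathcal{K}_{\Phi}$ is nonempty, so that the topological space is not vacuous, and, more importantly, that the passage from the indefiniteness of $\dnear$ at the level of pairs of sets to the indefiniteness of the derived metric $d_H^{\Phi}$ and its induced topology $\dHtop$ is precisely the one already packaged in Lemma~\ref{lemma:indefiniteTop}. Once that alignment is confirmed, the theorem is immediate from Corollary~\ref{theorem:fundamental}, Theorem~\ref{theorem:indefiniteDynamicalSys}, and Lemma~\ref{lemma:indefiniteTop}.
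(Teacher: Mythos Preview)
Your proposal is correct and follows essentially the same route as the paper: the paper's proof is simply ``Immediate from Definition~\ref{def:dynamicalSystem} and Lemma~\ref{lemma:indefiniteTop},'' and your argument unpacks exactly this, observing that a dynamical system is a physical system and then invoking Lemma~\ref{lemma:indefiniteTop}. Your additional citations of Corollary~\ref{theorem:fundamental} and Theorem~\ref{theorem:indefiniteDynamicalSys} are not wrong, but they merely re-derive content already absorbed into the statement and proof of Lemma~\ref{lemma:indefiniteTop}, so the paper omits them.
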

\begin{proof}
Immediate from Definition~\ref{def:dynamicalSystem} and Lemma~\ref{lemma:indefiniteTop}.
\end{proof}

\section{Descriptive Case}
There are many definitions of chaos in the literature. Among these definitions, we often encounter definition of chaos in sense of R.L. Devaney made in 1986~\cite{Devaney1986}, followed by studies of the relationship between individual chaos and collective chaos (see, e.g.,~\cite{Schief1994,Edelman2018,HaiderPeters2021self-similar,OzkanKulogluPeters2021self-similarilty}).
In this section, our aim is to analyze a chaotic dynamical systems in the descriptive sense by adding new features to it and observe them.
  
\subsection{Descriptive Dynamical System}
The fabric of a dynamical system is a family of interactions on a nonempty set of points (quanta) $X$. There is an orbit of each point $x\in X$ defined by a family of interations on $x$. In this subsection, we consider $X$ as a topological space together with a probe function $\Phi: X \to \mathbb{R}^m$ on it.

%{\color{blue}
\begin{definition}\label{def:familyOfInterations} {\bf (Family of Interations).}	\\
  The dynamical system $(X,f)$  on $X$ defined by $f$ is the family of iterations $\{f^n\}_{n\in \mathbb{Z}^{+}}$ with each $f^n$ mapping from  $X$ to itself. For $x\in X$ and $n\in \mathbb{Z}^+$ the orbit of $x$ is the set of points 
  \[x,f(x),f^2(x),\dots, f^n(x),\dots\]
  and  is denoted by
\begin{center}
\boxed{
	Orb_f(x)=\{f^n(x) : n\in \mathbb{Z}^{+}\}.
	} \qquad \textcolor{blue}{$\blacksquare$}
\end{center}
\end{definition}
%}

%{\color{blue}
\begin{definition}\label{def:DDS}{\bf (Descriptive dynamical system).}\\	
One can also define the  orbit of the descriptions of a point $x$ in X  which is set of points
\[\Phi(x),\Phi(f(x)),\Phi(f^2(x)),\dots,\Phi(f^n(x)),\dots \]
Briefly, we call the dynamical system $(X,f)$ together with a probe function $\Phi$ a {\bf  descriptive dynamical system} which is denoted by $(X,f, \Phi)$.
\qquad \textcolor{blue}{$\blacksquare$}
\end{definition}

\begin{definition}\label{def:descriptiveFixedPoints} \cite{PetersVergili2021GoodCoverings}  Given a descriptive dynamical system $(X,f, \Phi)$, a subset $A$ of $X$ is  
 \begin{compactenum}
	\item  \textit{a descriptive fixed subset of $f$}, provided $\Phi(f(A)) = \Phi(A)$
	\item \textit{a descriptive period-m set of f} if $\Phi(f^m(A))=\Phi(A)$
	and \newline $\Phi(f^j(A))\neq \Phi(A)$ 
	for  the set of descriptive periodic sets $j = 1, 2, \ldots, m-1$. 
\end{compactenum}
\qquad \textcolor{blue}{$\blacksquare$}
\end{definition}

%{\color{blue} 
When $A=\{a\}$ is a singleton set,  we simply replace the word "set" with the word "point". In that case, we say that a point $a\in X$ is 
\begin{compactenum}
	\item  a \textit{descriptive fixed point of} $f$ if $\Phi(f(a))=\Phi(a)$. 
	\item a \textit{descriptive period-m point of $f$} if $\Phi(f^m(a))=\Phi(a)$ and $\Phi(f^j(a))\neq\Phi(a)$ for $j=1,2,\dotsc, m-1$.
\end{compactenum}
%}

\begin{definition}\label{def:descriptivePeriodicPts}{\bf (Descriptive Periodic Points).}\\ 
Given a descriptive dynamical system $(X,f, \Phi)$, 
\begin{compactenum}
	\item \textit{the set of descriptive m-periodic points} is given by 
	\[\mbox{Per}_m({f, \Phi})= \left\{a\in X : \Phi(f^m(a))=\Phi(a) \right\}\]
	\item \textit{the set of descriptive periodic points} is given by 
	\[\mbox{Per}(f, \Phi)= \bigcup_{m\in \mathbb{Z}^+} \mbox{Per}_m(f, \Phi)= \left \{a\in X \ : \  \Phi(f^m(a))=\Phi(a),m\in \mathbb{Z}^+ \right\}.\]
\end{compactenum}
\qquad \textcolor{blue}{$\blacksquare$}
\end{definition}

%{\color{blue}
\begin{remark}
	Here, one can understand from  Definition~\ref{def:chaoticSyStem} and  Theorem~\ref{theorem:indefiniteDescriptively} that every period-m set on a chaotic dynamical system is an indefinite descriptive period-m set. However, the converse may not be true. 
	Even so,  a period-m set is a descriptive period-k set for $k\leq m$. 
\end{remark}
%}

%{\color{red}  Now we consider a descriptive dynamical system $(X, f, \Phi)$  where $X$ is also a metric space. Let  $(X,f,\Phi)$  be a  descriptive dynamical system and  $\mathcal{K}(X)$ denote a collection of the nonempty compact subsets of $X$ together with a Hausdorff metric $d_H$ on it. 
%}

%{\color{blue}
\begin{definition}
	The set-valued function $\bar{f}: \mathcal{K}(X) \rightarrow \mathcal{K}(X)$ is defined by $\bar{f} (A)=\{f(a) : a\in A\}$ so that the complete description of $A$ is 
	\begin{align*}
\Phi(\bar{f}(A)) &=\left \{\Phi(f(a)) \ :  \ a\in A \right \}\\ &= \left\{\big(\phi_1(f(a)),\phi_2(f(a)),\dotsc,\phi_n(f(a))\big) : a\in K\right\}\subseteq \mathbb{R}^n
	\end{align*} 
	and its descriptive orbit is a collection of sets
	\[ \Phi({\bar{f}(A))},\Phi(\bar{f}^2(A)),\dotsc ,\Phi(\bar{f}^n(A)),\dotsc \]
\end{definition}

\begin{remark}
 	For a subset $A$ of $X$ the extension of $A$ to $\mathcal{K}(X)$ is defined by 	\begin{center}
 	$e(A)=\{K\in \mathcal{K}(X) \ :  \ K\subset A\}.$	
 \end{center}
\end{remark}
Also  the following lemma is given in \cite{Flores2003}.

\begin{lemma}
	 For two subsets A and B of X, the following holds.
	 \begin{compactitem}
	 	\item  $e(A)\neq \emptyset$ if and only if $A\neq \emptyset$
	 	 \item $e(A)$ is open subset of $\mathcal{K}(X)$, if $A\subseteq X$ is open.
	 	 \item $e(A\cap B)=e(A)\cap e(B)$
	 	 \item $\bar{f}(e(A))\subseteq e(\bar{f}(A))$
	 	 \item $\bar{f}^n = \bar{f^n}$, for all $m\in \mathbb{Z}^+$
	 \end{compactitem}
 \end{lemma}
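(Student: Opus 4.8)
The final statement to prove is the Lemma from \cite{Flores2003}, which lists five elementary properties of the extension map $e$ and the set-valued lift $\bar f$. The plan is to verify each bullet in turn, treating them as largely routine set-theoretic unwindings of the definitions $e(A)=\{K\in\mathcal{K}(X):K\subset A\}$ and $\bar f(A)=\{f(a):a\in A\}$.

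First I would dispatch the equivalence $e(A)\neq\emptyset\iff A\neq\emptyset$: if $A\neq\emptyset$ pick any $a\in A$; the singleton $\{a\}$ is compact, hence $\{a\}\in e(A)$; conversely any $K\in e(A)$ satisfies $\emptyset\neq K\subset A$ (taking $K$ nonempty, or noting $\emptyset\subset A$ always but using a nonempty compact witness). Next, for openness, I would recall the subbasic open sets of the Vietoris/Hausdorff topology on $\mathcal{K}(X)$ of the form $\{K:K\subset U\}$ for $U$ open in $X$; since $e(A)$ is exactly this set with $U=A$, it is open whenever $A$ is. For $e(A\cap B)=e(A)\cap e(B)$, a compact $K$ satisfies $K\subset A\cap B$ iff $K\subset A$ and $K\subset B$ — a one-line double inclusion. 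For $\bar f(e(A))\subseteq e(\bar f(A))$, take $K\in e(A)$, so $K\subset A$ compact; then $\bar f(K)=\{f(a):a\in K\}\subset\{f(a):a\in A\}=\bar f(A)$, and $\bar f(K)$ is compact (continuous image of a compact set, using that $X$ is a topological space and $f$ is understood to be continuous in the dynamical-system setting), hence $\bar f(K)\in e(\bar f(A))$. Finally, $\bar f^{\,n}=\overline{f^{\,n}}$ follows by induction on $n$: the base case is the definition, and $\bar f^{\,n+1}(A)=\bar f(\bar f^{\,n}(A))=\bar f(\overline{f^{\,n}}(A))=\{f(b):b\in f^{\,n}(A)\text{-image}\}=\{f(f^{\,n}(a)):a\in A\}=\overline{f^{\,n+1}}(A)$, since iterating the image set once more is the same as applying the composite to each element.

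The main obstacle — more a matter of care than of depth — is that the lemma is quoted from \cite{Flores2003} in a slightly informal form, so I would need to make explicit the standing hypotheses under which it holds: namely that $\mathcal{K}(X)$ carries the Hausdorff (Vietoris) topology, that $f:X\to X$ is continuous so that $\bar f$ actually maps $\mathcal{K}(X)$ into $\mathcal{K}(X)$ and the openness/compactness claims go through, and (for the fourth bullet) to note that the inclusion is generally strict because $\bar f(e(A))$ need only contain images of compact subsets of $A$ whereas $e(\bar f(A))$ contains all compact subsets of $f(A)$, which may not be images of subsets of $A$ when $f$ is not injective. The fifth bullet's index should read "for all $n\in\mathbb{Z}^+$" rather than $m$; with that typo corrected the induction is immediate. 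Since every step is a direct consequence of the definitions recalled above, and the original source \cite{Flores2003} already contains the detailed verification, it suffices to record these arguments briefly.
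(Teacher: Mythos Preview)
Your proposal is correct: each of the five items is verified by a direct unwinding of the definitions of $e(-)$ and $\bar f$, with the standing hypotheses (Vietoris/Hausdorff topology on $\mathcal{K}(X)$, continuity of $f$, $\mathcal{K}(X)$ consisting of nonempty compacta) made explicit, and the typo $m\mapsto n$ noted.

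The only point of comparison is that the paper does not actually prove this lemma at all; it merely attributes it to \cite{Flores2003} and states it without argument. So your write-up is not an alternative route but rather a self-contained verification supplying what the paper omits. Your remarks on why the inclusion in the fourth item is generally strict, and on the implicit assumption that $f$ is continuous so that $\bar f$ preserves compactness, are useful clarifications that go slightly beyond what either the paper or the cited source makes explicit.
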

Here we present a descriptive approach to Devaney's definition of chaos by introducing  new definitions: a dense collection of descriptive periodic sets, descriptive transitivity and descriptively sensitive.  
 \begin{definition}
 	A  mapping $\bar{f}:  \mathcal{K}(X) \rightarrow  \mathcal{K}(X)$ is said to be descriptively chaotic if  
 	\begin{compactitem}
 		\item the collection of $\mbox{Per}(\bar{f}, \Phi)$ descriptive periodic compact sets which is given by \[ \bigcup_{m\in \mathbb{Z}^+} \mbox{Per}_m(\bar{f}, \Phi)=\left \{K\in \mathcal{K}(X) \ : \  \Phi(\bar{f}^m(K))=\Phi(K),m\in \mathbb{Z}^+ \right\}\] is dense in $\mathcal{K}(X)$, 
 		\item for every $e(U),e(V)$ which are respectively extensions of  nonempty open pair $U,V \subseteq X$, there exists $n\in\mathbb{Z}^{+}$ such that  
 		$ \bar{f}(e(U))$ and $e(V)$ are descriptively proximal, i.e., $\Phi(\bar{f}^n(e(U)))\cap\Phi(e(V))\neq \emptyset$ ($\bar{f}$ is descriptively transitive), and 
 		\item there is a $\delta>0$ (sensitive constant) such that whenever $e(U)$ is an extension of a nonempty open subset $U$ of $X$, there exist $A,B$ in  $e(U)$ and  $n\in \mathbb{Z}^{+}$ such that $d^{\Phi}(\bar{f}^n(A),\bar{f}^n(B))=d_H(\Phi(\bar{f}^n(A),\bar{f}^n(B)))>\delta$
 	($\bar{f}$ is descriptively sensitive).
 	\end{compactitem}  
 \end{definition}
%}
 
%\begin{definition}
%	In a descriptive dynamical system, the function f is called a descriptively transitive in $X$, provided that	for any non-empty open pair of $A, B \subseteq X$ there exists $n\in\mathbb{Z}^{+}$ such that  $f^n(A)$ and $B$ are descriptively proximal, i.e., $\Phi(f^n(A))\cap \Phi(B)\neq \emptyset$ \big(there exist $a\in A$ and $b\in B$ such that $\Phi(f^n(a))=\Phi(b)\big)$. \qquad \textcolor{blue}{$\blacksquare$}
%\end{definition}

\begin{lemma}
If the collection of periodic sets in a Hausdorff metric space is dense, so is the collection of descriptively periodic sets.
\end{lemma}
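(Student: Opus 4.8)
The plan is to show that the extension operator $e(-)$ and the set-valued map $\bar f$ behave well enough under the descriptive probe $\Phi$ that density is transported from the ordinary periodic sets to the descriptive ones. First I would recall the hypothesis: the collection $\mathrm{Per}(\bar f) = \{K \in \mathcal{K}(X) : \bar f^m(K) = K \text{ for some } m \in \mathbb{Z}^+\}$ is dense in the Hausdorff metric space $(\mathcal{K}(X), d_H)$. I would then observe the elementary implication that every genuine periodic set is automatically a descriptive periodic set: if $\bar f^m(K) = K$ then $\Phi(\bar f^m(K)) = \Phi(K)$, so $K \in \mathrm{Per}(\bar f, \Phi)$. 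Hence $\mathrm{Per}(\bar f) \subseteq \mathrm{Per}(\bar f, \Phi)$ as subsets of $\mathcal{K}(X)$.

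Next I would invoke the standard topological fact that a superset of a dense set is dense: since $\mathrm{Per}(\bar f)$ is dense in $\mathcal{K}(X)$ and $\mathrm{Per}(\bar f) \subseteq \mathrm{Per}(\bar f, \Phi)$, the closure of $\mathrm{Per}(\bar f, \Phi)$ contains the closure of $\mathrm{Per}(\bar f)$, which is all of $\mathcal{K}(X)$; therefore $\mathrm{Per}(\bar f, \Phi)$ is dense in $\mathcal{K}(X)$. Concretely, given any $K \in \mathcal{K}(X)$ and any $\varepsilon > 0$, density of $\mathrm{Per}(\bar f)$ produces a periodic set $K'$ with $d_H(K,K') < \varepsilon$, and $K'$ already lies in $\mathrm{Per}(\bar f, \Phi)$, which is exactly what density of the descriptive periodic sets requires.

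I would close by remarking that, if one wants the density statement phrased with respect to the descriptive Hausdorff distance $d_H^{\Phi}$ rather than $d_H$, the same argument goes through provided $\Phi$ is continuous (or at least non-expansive) from $(\mathcal{K}(X), d_H)$ to $(\mathcal{K}_{\Phi}, d_H^{\Phi})$, since then $d_H(K,K') < \varepsilon$ forces $d_H^{\Phi}(K,K')$ small as well; this is consistent with the probe-function hypotheses set up in Definition~\ref{def:DDS} and the surrounding discussion. The main (and really only) obstacle is a bookkeeping one: making sure the ambient notion of "dense" and the relevant metric are the same on both sides of the inclusion, i.e. that we are comparing $\mathrm{Per}(\bar f)$ and $\mathrm{Per}(\bar f, \Phi)$ inside one fixed metric space rather than silently switching between $d_H$ and $d_H^{\Phi}$; once that is pinned down, the inclusion $\mathrm{Per}(\bar f) \subseteq \mathrm{Per}(\bar f, \Phi)$ does all the work.
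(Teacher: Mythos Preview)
Your proof is correct and follows exactly the same approach as the paper: the paper's one-line argument is that any periodic set is also a descriptively periodic set, which is precisely your inclusion $\mathrm{Per}(\bar f) \subseteq \mathrm{Per}(\bar f, \Phi)$ combined with the fact that a superset of a dense set is dense. Your additional paragraph about $d_H$ versus $d_H^{\Phi}$ is a reasonable caveat, but the paper does not address it and simply works in the ambient Hausdorff metric space throughout.
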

\begin{proof}
Immediately follows from the fact that any periodic set is also descriptively periodic set. 
\end{proof}

\begin{lemma}
If $\bar{f}$ is topologically transitive, so is descriptively transitive. 
\end{lemma}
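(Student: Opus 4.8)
The plan is to unwind the definitions of \emph{topologically transitive} and \emph{descriptively transitive} for the set-valued map $\bar{f}\colon \mathcal{K}(X)\to\mathcal{K}(X)$ and show that the former condition is formally stronger. Recall that $\bar{f}$ is topologically transitive when, for every pair of nonempty open sets $e(U),e(V)$ in $\mathcal{K}(X)$ arising as extensions of nonempty open $U,V\subseteq X$, there is an $n\in\mathbb{Z}^+$ with $\bar{f}^n(e(U))\cap e(V)\neq\emptyset$; while $\bar{f}$ is descriptively transitive when, under the same hypotheses, there is an $n\in\mathbb{Z}^+$ with $\Phi(\bar{f}^n(e(U)))\cap\Phi(e(V))\neq\emptyset$, i.e.\ $\bar{f}^n(e(U))$ and $e(V)$ are descriptively proximal in the sense of Definition~\ref{def:dnear}.

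First I would fix arbitrary nonempty open $U,V\subseteq X$ and form their extensions $e(U),e(V)\subseteq\mathcal{K}(X)$, which are nonempty (by the first bullet of the lemma from \cite{Flores2003}, since $U,V\neq\emptyset$) and open (by the second bullet of that same lemma). Then I would invoke topological transitivity of $\bar{f}$ to obtain $n\in\mathbb{Z}^+$ and a compact set $K\in \bar{f}^n(e(U))\cap e(V)$. The key observation is that a single common element $K$ of the two families forces their descriptions to meet: taking any $k\in K$ gives $\Phi(k)\in\Phi(\bar{f}^n(e(U)))$ and $\Phi(k)\in\Phi(e(V))$ simultaneously, so $\Phi(\bar{f}^n(e(U)))\cap\Phi(e(V))\neq\emptyset$ (using $\bar{f}^n=\bar{f^n}$ from the last bullet of the \cite{Flores2003} lemma if one wishes to record the iterate cleanly). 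Since $U,V$ were arbitrary, $\bar{f}$ is descriptively transitive.

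The conceptual content is simply that ordinary set intersection is preserved under the image of any map, so nonempty intersection of the families implies nonempty intersection of their $\Phi$-images; this mirrors the pattern of the two preceding lemmas, where every periodic/transitive datum is \emph{a fortiori} a descriptive one. I do not expect a genuine obstacle here — the only point requiring mild care is bookkeeping between the families $\bar{f}^n(e(U))$, their union of members, and the application of $\Phi$ to members rather than to the families themselves, together with citing the $e(-)$ lemma to guarantee $e(U),e(V)$ are nonempty and open so that topological transitivity actually applies to them.
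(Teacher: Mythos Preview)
Your proposal is correct and follows essentially the same approach as the paper: pick a common $K\in\bar f^{\,n}(e(U))\cap e(V)$ from topological transitivity and conclude that the $\Phi$-images meet. The only cosmetic difference is that the paper stays at the level of the compact set $K$ (writing $K=\bar f^{\,m}(K')$ for some $K'\in e(U)$ and noting $\Phi(\bar f^{\,m}(K'))=\Phi(K)\in\Phi(e(V))$), whereas you descend to a point $k\in K$; given the paper's set-level reading of $\Phi(e(V))$ as $\{\Phi(K):K\in e(V)\}$, it is cleaner to argue with $\Phi(K)$ itself rather than with $\Phi(k)$, but the substance of your argument is the same.
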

\begin{proof}	
Let $e(U),e(V)$  be extensions of two nonempty open subsets $U,V$ of $X$ respectively. Since $\bar{f}$ is topologically transitive, there exists  a positive integer $m$ such that $\bar{f}^m(e(U))\cap e(V) \neq \emptyset$. In that case there exists a compact subset  $K$ in $\mathcal{K}(X)$  such that $K\in \bar{f}^m(e(U))$ and $K\in e(V)$ . Since $K$ is in the image of $\bar{f}^m(e(U))$,  there is $K'$ in $e(U)$ so that $\bar{f}^m(K')=K$. This implies $\Phi(\bar{f}^m(K^{\prime}))= \Phi(K)$ i.e., $\Phi(\bar{f}^m(e(U)))\cap \Phi(e(V))\neq \emptyset$.
\end{proof}

%\begin{definition}
%	Let $(X,d)$ and $(\mathbb{R}^n,D)$ metric spaces,  $f:X\rightarrow X$ a map (continuos function)  and $\Phi:X \rightarrow \mathbb{R}^n$ a probe function. The $f$ is {\bf descriptively sensitive dependent on initial conditions} in $X$, provided that there exists $\delta>0$ such that for any $a\in X$ and for any neighborhood $N$ of $a$, there exist $b\in N$ and $n\in \mathbb{Z}^+$ such that $ D(\big(\Phi(f^n(a)),\Phi (f^n(b))\big)=r>\delta$. \qquad \textcolor{blue}{$\blacksquare$}
%\end{definition}

%\begin{definition}
 % For a descriptive dynamical metric space $(X,d, \Phi_f)$ a map $f: X \to X$ is called to be descriptively chaotic if 
  
  %\begin{compactitem}
%	\item The set of $Per(\Phi_f)$ is dense in $X$.
%	\item $f$ is descriptively transitive.
%	\item $f$ is descriptively sensitive on initial conditions.
 %  \end{compactitem}
%\textcolor{blue}{\Squaresteel}
%\end{definition}

\begin{figure}[!ht]
	\centering
	\includegraphics[width=125mm]{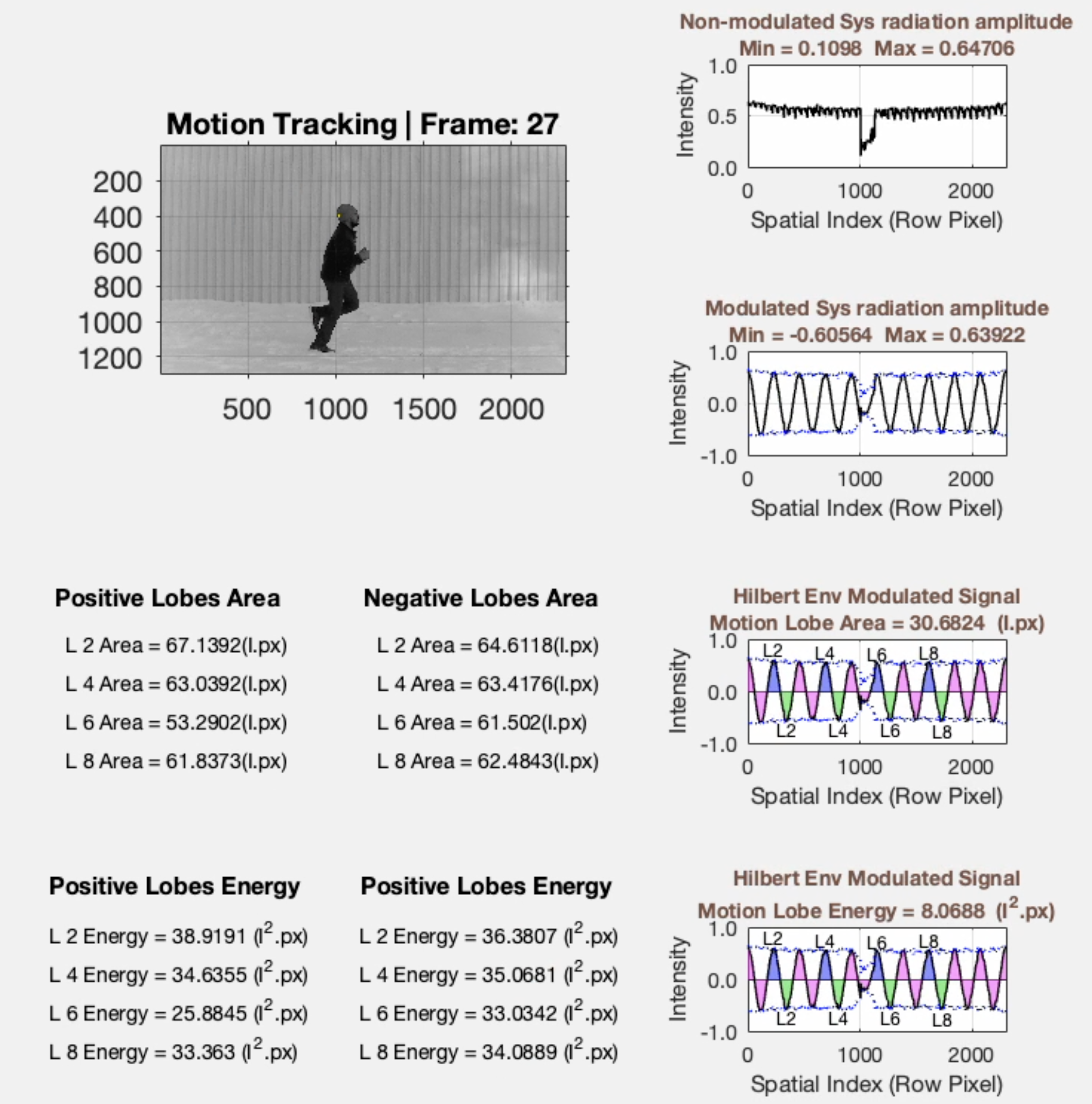}
	\caption{Relaxed Proximities Between Hilbert Lobes}
	\label{fig:relaxed}
\end{figure}

\section{Application: Relaxed Descriptive Proximities}\label{application}
This application introduces a relaxed descriptive proximity (denoted by $\donear$), which is a refinement of the indefinite proximity relation 
%{\color{red}  
$\near_{\lim\Phi}$
%} 
in the proof of Lemma.~\ref{lemma:indefiniteDescriptively}. $\donear$ serves as means of pinpointing non-equal descriptively close segments of motion waveforms emanating from vibrating dynamical systems.  Let $A,B$ be measurable bounded regions in a typical motion waveform of a dynamical system. A practical outcome $\donear(A,B)$ is the detection those portions of system motion that are stable and have low energy dissipation.\\

A pair of nonempty sets $A,B$ have relaxed descriptive proximity (denoted by \boxed{A\ \donear\ B}, provided the characteristics of $A$ and $B$ are close enough.

\begin{definition}\label{def:donear}{\bf Relaxed Descriptive Proximity~\cite{ErdagPetersDeveci2024Dnear0}.}\\
Let $\varepsilon\in [0,1]$ and let $2^X$ denote the collection of subsets of a nonempty set $X$. The relaxed descriptive proximity relation $\donear$ is defined by
	\begin{center}
	\boxed{A\ \donear\ B\ \Leftrightarrow 
	\abs{\Phi(A)-\Phi(B)<\epsilon, A,B\subset X.}
 \mbox{\qquad \textcolor{blue}{$\blacksquare$}}}
	\end{center}
\end{definition}

The focus here is on proximal Hilbert envelope lobes inherent in time-constrained dynamical systems. Encapsulated in a Hilbert envelope, there are comparable regions called lobes.

\begin{definition}\label{def:HilbertEnvelope}{\bf (Hilbert Envelope Lobe).}\\  
A Hilbert envelope lobe (denoted by $H_{env}$) is a tiny bounded planar region attached to single waveform peak point on a waveform envelope, defined by
\begin{center}
\vspace{0.2cm}
\boxed{\boldsymbol{
H_{env} = \sqrt{m(t)^2 + (-m(t))^2}\mbox{\rm\ \cite{Brandt2011}}\mbox{
\qquad \textcolor{blue}{$\blacksquare$}}
}}
\end{center}
\vspace{0.2cm}
\end{definition}

\begin{remark}  
Hilbert envelope lobe regions provide a measure of motion waveform energy dissipation. The bigger a lobe region, the greater the motion energy.  Our interest here is in identifying those motion waveforms having a high number of proximal Hilbert lobe energy regions that have a corresponding low energy dissipation (denoted by $E_{diss}(t)$ at time $t$).  
\qquad \textcolor{blue}{$\blacksquare$}
\end{remark}
%(e.g., envelopes on a vibrating system waveforms).

Expenditure of energy $E_{m(t)}$ by a dynamical system $S_d$ is measured in terms of the area bounded by the motion $m(t)$ waveform emanating from $S_d$ at time $t$, i.e.,

\begin{definition}\label{axiom:waveformEnergy}{\bf (Waveform Energy).}\\
A measure of dynamical system energy is the area of a finite planar region bounded by system waveform $m(t)$ curve at time $t$,
defined by
%\vspace*{0.2cm}
\begin{center}
\boxed{\boldsymbol{
E_{m(t)} = \int_{t_0}^{t_1}\abs{m(t)}^2dt
}.}
\end{center}
\end{definition}

\begin{lemma}\label{lemma:energy}{\bf (Dynamical System Energy~\cite{PetersLiyanage2024}).}\\
Dynamical system energy is time-constrained and is always limited.
\end{lemma}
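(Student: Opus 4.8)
The plan is to read the conclusion straight off Definition~\ref{axiom:waveformEnergy}, reducing the statement to two observations: the domain of integration is a finite interval, and the integrand is bounded. First I would recall that the energy expended by a dynamical system $S_d$ over the clock interval $[t_0,t_1]$ is $E_{m(t)}=\int_{t_0}^{t_1}\abs{m(t)}^2\,dt$. The ``time-constrained'' half of the claim is then immediate: $S_d$ is a physical system observed between two clock ticks $t_0$ and $t_1$, so $t_1-t_0<\infty$ and the interval over which energy accumulates is bounded.

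Next I would argue that the integrand is bounded. Since $m(t)$ is the motion waveform emanating from a physical dynamical system (Definition~\ref{def:dynamicalSystem}), its amplitude at each instant is a measurable characteristic of a collection of physical objects, hence finite; continuity of $m$ on the compact interval $[t_0,t_1]$ (or, failing continuity, the physical boundedness of the amplitude) supplies a constant $M\geq 0$ with $\abs{m(t)}\leq M$ for every $t\in[t_0,t_1]$. Monotonicity of the integral then gives
\begin{center}
$E_{m(t)}=\displaystyle\int_{t_0}^{t_1}\abs{m(t)}^2\,dt\ \leq\ M^2\,(t_1-t_0)\ <\ \infty,$
\end{center}
which is the ``always limited'' half of the claim.

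The only genuinely delicate point is justifying the bound $\abs{m(t)}\leq M$, and this is a physical rather than an analytic matter: it rests on the fact that a dynamical system is a finite configuration of physical objects whose measurable attributes cannot be infinite (cf.\ Axiom~\ref{axiom:Physical}, which already treats every description as a finite quantity $k\in\mathbb{Z}^+$). I would flag this assumption explicitly; alternatively, adopting the standing hypothesis that the waveform $m$ is continuous lets one invoke the extreme value theorem on $[t_0,t_1]$ and dispenses with any appeal to physics. Everything else is the one-line integral estimate above, so I do not anticipate further obstacles.
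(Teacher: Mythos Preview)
Your argument is sound: reading the energy as $\int_{t_0}^{t_1}\abs{m(t)}^2\,dt$ from Definition~\ref{axiom:waveformEnergy}, the finite length of $[t_0,t_1]$ gives the ``time-constrained'' part, and a uniform bound $\abs{m(t)}\leq M$ (via continuity on a compact interval, or via the physical finiteness of measured amplitudes) gives the ``always limited'' part through the estimate $E_{m(t)}\leq M^2(t_1-t_0)$. You are also right to flag that the bound on $\abs{m(t)}$ is ultimately a physical modelling assumption rather than a purely analytic fact.

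There is nothing to compare against in the present paper, however: Lemma~\ref{lemma:energy} is stated here without proof and attributed to~\cite{PetersLiyanage2024}. So your write-up is not a competing approach but rather a self-contained justification that the paper simply imports from the cited source. If you want to match the paper exactly, a one-line appeal to~\cite{PetersLiyanage2024} suffices; if you want the manuscript to be self-contained, your short derivation from Definition~\ref{axiom:waveformEnergy} is an appropriate replacement.
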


From Lemma~\ref{lemma:energy}, we obtain 

\begin{theorem}\label{theorem:Em}{\bf (Time-constrained Dynamical System Energy~\cite{PetersLiyanage2024} ).}\\
Let $X$ be a dynamical system with waveform $m(t)$ at time $t$. The energy of $X$ varies with every clock tick.
\end{theorem}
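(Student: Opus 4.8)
\textbf{Proof proposal for Theorem~\ref{theorem:Em}.}

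The plan is to derive the statement directly from the waveform-energy formula in Definition~\ref{axiom:waveformEnergy} together with the constraint supplied by Lemma~\ref{lemma:energy}. First I would fix the dynamical system $X$ and its motion waveform $m(t)$, and recall that its energy over an interval is $E_{m(t)} = \int_{t_0}^{t_1}\abs{m(t)}^2\,dt$. The key observation is that a clock tick advances the right endpoint of the window of integration: passing from one tick to the next replaces the interval $[t_0,t_1]$ by $[t_0,t_1+\Delta t]$ (or, if one prefers a sliding window, by $[t_0+\Delta t,t_1+\Delta t]$), where $\Delta t>0$ is the tick length. Hence the change in energy across a single tick equals $\int_{t_1}^{t_1+\Delta t}\abs{m(t)}^2\,dt$ (respectively the difference of the two boundary contributions), which is nonzero precisely because $X$ is a genuine dynamical system whose waveform is not identically zero on the advancing sliver of time.

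The second step is to invoke Lemma~\ref{lemma:energy}: dynamical system energy is time-constrained and always limited. This rules out the degenerate escape route in which the integral could simply accumulate without the ``current'' energy changing — the boundedness forces $E_{m(t)}$ to be a genuinely time-dependent quantity rather than a constant, since a non-trivial waveform feeding a bounded reservoir must be continuously redistributing energy (into dissipation, via $E_{diss}(loc,t)$). Combining this with the first step, at each clock tick the value $E_{m(t)}$ is reassessed over a shifted window and the boundedness guarantees the reassessed value differs from the previous one; therefore the energy of $X$ varies with every clock tick, which is the claim.

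I expect the main obstacle to be not a computation but a matter of making precise what ``varies with every clock tick'' means and why the variation is strict rather than merely possible: one must argue that on no tick can the incremental contribution $\int_{t_1}^{t_1+\Delta t}\abs{m(t)}^2\,dt$ vanish, i.e.\ that the waveform of a bona fide dynamical system is not flat-zero on any tick-length subinterval. This follows from Definition~\ref{def:dynamicalSystem} (a dynamical system \emph{changes} over time, so $m(t)$ is not constant on any positive-length window) combined with the self-similarity/indefiniteness already established for such systems in Theorem~\ref{theorem:indefiniteDynamicalSys}; once that non-degeneracy is in hand, the integral over each advancing sliver is strictly positive and the conclusion is immediate.
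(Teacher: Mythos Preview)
Your proposal is far more elaborate than what the paper actually does. The paper supplies no proof environment for this theorem at all: it simply prefaces the statement with the phrase ``From Lemma~\ref{lemma:energy}, we obtain'' and stops there (both Lemma~\ref{lemma:energy} and Theorem~\ref{theorem:Em} are imported from~\cite{PetersLiyanage2024}). So where you build an argument around shifting integration windows, boundedness, and non-degeneracy of $m(t)$, the paper treats the result as an immediate, un-argued corollary of the assertion that dynamical system energy is time-constrained and limited.

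On the substance of your argument, the weak point is precisely the one you flag in your final paragraph, and your proposed fix does not close it. The strict positivity of $\int_{t_1}^{t_1+\Delta t}\abs{m(t)}^2\,dt$ on \emph{every} tick does not follow from Definition~\ref{def:dynamicalSystem}: ``changes over time'' does not exclude $m(t)$ vanishing identically on some tick-length subinterval. Your appeal to Theorem~\ref{theorem:indefiniteDynamicalSys} is misplaced --- that result says descriptive proximity spaces on dynamical systems are indefinite (an assertion about the number of characteristics in $\Phi$), which has nothing to do with whether the waveform $m(t)$ is nonzero on a given interval. So while your outline is more candid than the paper about what would need to be shown, the non-degeneracy step does not go through with the tools you cite; the paper sidesteps the issue entirely by declaring the theorem a consequence of Lemma~\ref{lemma:energy} without further justification.
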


\begin{definition}\label{def:Ediss}{\bf (Energy Dissipation~\cite{PetersLiyanage2024}).}\\
For a motion waveform $m(t)$ at time $t$, let $loc$ identify a bounded region (called a Hilbert lobe) at location $x$ in an enveloped waveform and let $t$ be the time of occurence of the lobe. A measure of motion dissipated energy is the mapping\\ $E_{diss}:\mathbb{R}\times\mathbb{R}\to \mathbb{R}$, which is defined by
\[
E_{diss}(loc,t) = \abs{E(x,t)-E(x',t')}\geq 0\ \mbox{for quantized dissipation}.
\]
There are two different forms of motion energy dissipation to consider, namely,\\

\begin{compactenum}[1$^o$]
\item \textbf{Energy Dissipation within one frame}\\
Let $x,x'$ identify lobes in two different locations at time $t$ in an enveloped waveform in a single video frame at elapsed time $t$.  Then
\begin{center}
\vspace{0.1cm}
\boxed{E_{diss}(loc,t) = \abs{E(x,t)-E(x',t)}.
}
\end{center} 
\vspace{0.1cm}
\item {\bf Energy Dissipation between different frames}\\
Let $x=x'$ identify a lobe in the same location at times $t,t'$ in a pair video frames.  Then
\begin{center}
\boxed{E_{diss}(loc,t) = \abs{E(x,t)-E(x,t')}.
}\mbox{\qquad \textcolor{blue}{$\blacksquare$}
}
\end{center} 
\end{compactenum}
\end{definition}

\noindent The motivation for considering two different forms of $E_{diss}(loc,t)$ stems from Lemma~\ref{lemma:donearLobes}, which associates the relaxed descriptive proximity of a pair of energy regions with corresponding energy dissipation that is minimal.
 
%For a motion waveform $m(t)$ at time $t$, measure of motion dissipated energy $E_{diss}$ is defined in terms of the difference between energy $E(t)$ at time $t$ and energy $E(t')$at a later time t', i.e.,
%\\
%\textbf{Energy Dissipation within one frame}
%\begin{center}
%\boxed{E_{diss}(x-x',t) = \abs{E(x,t)-E(x',t)}.
%}
%\end{center} 
%\textbf{Energy Dissipation between different frame}
%\begin{center}
%\boxed{E_{diss}(x,t-t') = \abs{E(x,t)-E(x,t')}.
%}
%\end{center} 

%at time $t$ of a vibratory system.
%\vspace{0.2cm}
%The motivation for considering motion curves with a number descriptively proximal energy regions stems from the following conjecture.
%\vspace{0.2cm}

\begin{lemma}\label{lemma:donearLobes}
Let $E_{m(t)},E_{m(t'), t\neq t'}$ be the energy motion $m(t)$ at time $t$ and the energy of motion $m(t)$ of a dynamical system $S$. And let $E_{diss}(loc,t)$ be the energy dissipation $S$ at times $t,t'$.
\begin{center} 
$
E_{m(t)}\ \donear\ E_{m(t')}
\Leftrightarrow E_{diss}(loc,t)<\varepsilon\in [0,1].
$
\end{center}
That is, $E_{diss}(loc,t)$ is minimal.
\end{lemma}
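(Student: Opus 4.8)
The plan is to unwind both sides of the biconditional directly from the definitions in play, since the statement is essentially a translation of the relaxed descriptive proximity relation $\donear$ (Definition~\ref{def:donear}) into the language of energy dissipation $E_{diss}$ (Definition~\ref{def:Ediss}). First I would treat the two energies $E_{m(t)}$ and $E_{m(t')}$ as descriptions of the bounded Hilbert lobe regions occurring at times $t$ and $t'$; that is, I would take the probe function here to be the waveform energy functional of Definition~\ref{axiom:waveformEnergy}, so that $\Phi(E_{m(t)})$ is recorded by the numerical value $E_{m(t)}$ and likewise for $t'$. Under this identification, $\abs{\Phi(E_{m(t)}) - \Phi(E_{m(t')})}$ is exactly $\abs{E_{m(t)} - E_{m(t')}}$, which in turn is the quantity $\abs{E(x,t) - E(x',t')}$ appearing in Definition~\ref{def:Ediss}, i.e. $E_{diss}(loc,t)$.

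The forward direction ($\Rightarrow$) then goes as follows: assume $E_{m(t)}\ \donear\ E_{m(t')}$. By Definition~\ref{def:donear}, this means $\abs{\Phi(E_{m(t)}) - \Phi(E_{m(t')})} < \varepsilon$ for some $\varepsilon \in [0,1]$. By the identification above, this is precisely $E_{diss}(loc,t) < \varepsilon \in [0,1]$, which is the right-hand side. For the converse ($\Leftarrow$), assume $E_{diss}(loc,t) < \varepsilon$ for some $\varepsilon \in [0,1]$; reading Definition~\ref{def:Ediss} backwards gives $\abs{E_{m(t)} - E_{m(t')}} < \varepsilon$, hence $\abs{\Phi(E_{m(t)}) - \Phi(E_{m(t')})} < \varepsilon$, which by Definition~\ref{def:donear} is exactly $E_{m(t)}\ \donear\ E_{m(t')}$. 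The final clause, that $E_{diss}(loc,t)$ is minimal, follows because $\varepsilon$ can be taken arbitrarily small in $[0,1]$: being $\donear$-related forces the dissipation below every positive threshold that the relaxed proximity tolerance admits, so among the possible dissipation values compatible with descriptive closeness it attains the least magnitude, which by Definition~\ref{def:Ediss} is bounded below by $0$.

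The main obstacle I anticipate is not a deep mathematical difficulty but a bookkeeping one: making the identification of $E_{m(t)}$ with a description $\Phi$ of a lobe region precise and consistent, since Definition~\ref{def:Ediss} phrases dissipation in terms of two location-time pairs $(x,t)$ and $(x',t')$ while Lemma~\ref{lemma:donearLobes} writes it as the single symbol $E_{diss}(loc,t)$. I would resolve this by invoking the two explicit forms of $E_{diss}$ given in Definition~\ref{def:Ediss} — the within-frame form $\abs{E(x,t) - E(x',t)}$ and the between-frame form $\abs{E(x,t) - E(x,t')}$ — and observing that in either case the quantity equals $\abs{E_{m(t)} - E_{m(t')}}$ once we read $E_{m(t)}$ as the energy of the lobe indexed by its location and time. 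After that normalization the proof is a one-line chain of equivalences through Definitions~\ref{def:donear} and~\ref{def:Ediss}, and I would present it as such.
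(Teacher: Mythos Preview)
Your proposal is correct and follows essentially the same approach as the paper: both arguments reduce the biconditional to a direct unwinding of Definition~\ref{def:donear} and Definition~\ref{def:Ediss}, identifying $\abs{\Phi(E_{m(t)})-\Phi(E_{m(t')})}$ with $E_{diss}(loc,t)$. The paper's version is terser and appeals to~\cite{PetersLiyanage2024} for the minimality clause rather than arguing it from the smallness of~$\varepsilon$, but the logical content is the same.
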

\begin{proof}
It is known that energy dissipation $E_{diss}(t')$ in a motion curve is minimal, when the difference between $E_{m(t)},E_{m(t')}, t<t'$ is small~\cite{PetersLiyanage2024}. This occurs when $E_{m(t)}\ \donear\ E_{m(t')}$, i.e., $E_{diss}(t)<\varepsilon\in [0,1]$.  This gives the desired result.
\end{proof}

\begin{theorem}\label{theorem:lowEdiss}
Every Hilbert enveloped motion curves $m(t),m(t')$ with 
$E_{m(t)}\ \donear\ E_{m(t')}$ at times $t\neq t'$ has low energy dissipation.
\end{theorem}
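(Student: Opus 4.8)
The plan is to obtain the statement essentially as a corollary of Lemma~\ref{lemma:donearLobes}, reading "low energy dissipation" as the minimality bound $E_{diss}(loc,t)<\varepsilon\in[0,1]$ furnished by that lemma. First I would fix an arbitrary pair of Hilbert enveloped motion curves $m(t),m(t')$ with $t\neq t'$ and assume the hypothesis $E_{m(t)}\ \donear\ E_{m(t')}$. By Definition~\ref{def:HilbertEnvelope} the envelope $H_{env}$ is well defined on each curve, so the associated Hilbert lobe energy regions are genuine bounded planar regions whose areas yield the waveform energies $E_{m(t)}$ and $E_{m(t')}$ via Definition~\ref{axiom:waveformEnergy}; Lemma~\ref{lemma:energy} and Theorem~\ref{theorem:Em} ensure these energies are finite and time-constrained, so the difference defining $E_{diss}(loc,t)$ in Definition~\ref{def:Ediss} is well posed.

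Next I would apply Lemma~\ref{lemma:donearLobes} in the forward direction: from $E_{m(t)}\ \donear\ E_{m(t')}$ it gives $E_{diss}(loc,t)<\varepsilon\in[0,1]$ at the lobe location $loc$, which is exactly the concluding assertion of that lemma that $E_{diss}(loc,t)$ is minimal. Since $t\neq t'$ was arbitrary, and since both the within-frame form and the between-frame form of $E_{diss}(loc,t)$ in Definition~\ref{def:Ediss} are controlled by the same relaxed-proximity hypothesis on the aggregate energies $E_{m(t)},E_{m(t')}$, the universal ("every") quantifier in the statement is covered, giving the desired conclusion.

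The main obstacle here is not computational — the theorem is close to a restatement of Lemma~\ref{lemma:donearLobes} — but definitional bookkeeping: I would need to make explicit the identification of "low energy dissipation" with the bound $E_{diss}(loc,t)<\varepsilon$, and, if one wants the claim for every lobe of the enveloped curve rather than a single distinguished $loc$, to argue that the relaxed descriptive proximity of the aggregate energies forces the per-lobe dissipation to remain small at each location $loc$ by the same estimate. Once that reading is pinned down, the proof is a one-line invocation of Lemma~\ref{lemma:donearLobes}.
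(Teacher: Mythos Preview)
Your proposal is correct and matches the paper's own approach: the paper proves this theorem in one line, ``Immediate from Lemma~\ref{lemma:donearLobes}.'' Your additional bookkeeping about Definitions~\ref{def:HilbertEnvelope}, \ref{axiom:waveformEnergy}, and~\ref{def:Ediss} is more explicit than the paper, but the core idea---read ``low energy dissipation'' as the bound $E_{diss}(loc,t)<\varepsilon\in[0,1]$ and invoke the forward direction of Lemma~\ref{lemma:donearLobes}---is exactly what the paper does.
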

\begin{proof}
Immediate from Lemma~\ref{lemma:donearLobes}.
\end{proof}

\begin{figure}[!ht]
	\centering
	\includegraphics[width=50mm]{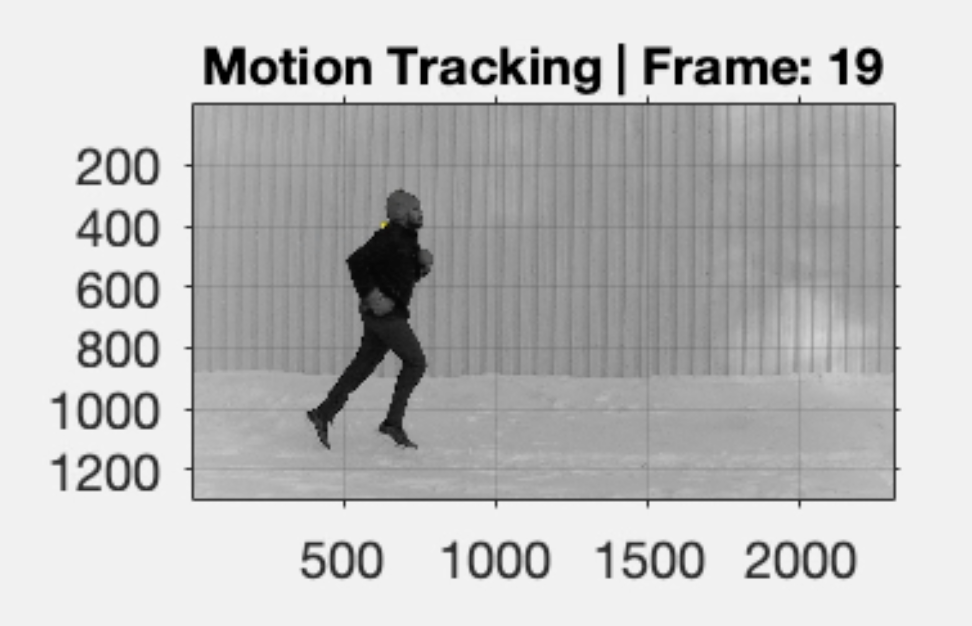}\quad\includegraphics[width=50mm]{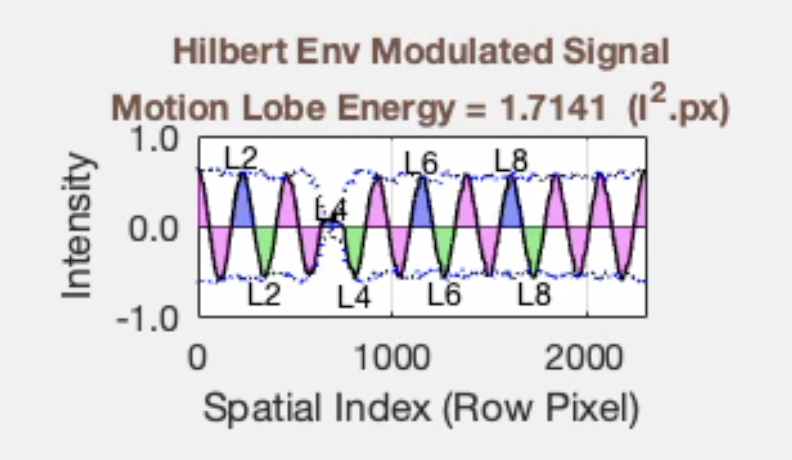}
	\caption{Source of Hilbert energy lobes in Table~\ref{table:runner19}}
	\label{fig:Ediss}
\end{figure} 

The sample lobe areas (aka motion energy regions) in Table~\ref{table:runner19} (see Figure~\ref{fig:Ediss}) lead to Example~\ref{ex:fr19}.

\begin{table}[htbp]%\label{table:runner19}
\centering
\caption{Lobe Energy for Runner in IR Video Frame 19}
\begin{tabular}{|c|c|c|c|}
\hline
\textbf{Lobe} & \textbf{+ve Lobe Areas} & \textbf{-ve Lobe Areas} & \textbf{Energy Dissipation} \\
           & \textbf{(+ve lobe)}                 & \textbf{(-ve lobe)}             & \boxed{E_{diss}}         \\
\hline
L2  & 38.952 & 36.2096 & 2.7424 \\
\hline
L4  & 2.522 & 32.3247 & 29.8027 \\
\hline
L6  & 33.1404 & 33.3126 & 0.1722 \\
\hline
L8 & 33.6902 & 33.9613 & 0.2711 \\
\hline
\end{tabular}
\label{table:runner19}
\end{table}

\begin{example}\label{ex:fr19}{\bf (Relaxed Proximities Within a Frame).}
Let $\epsilon = 0.03$.  From Table~\ref{table:runner19}, observe\\
\begin{compactenum}[1$^o$]
\item $L2^{+ve}\ \not{\donear}\ L2^{-ve}$.
\item $L4^{+ve}\ \not{\donear}\ L4^{-ve}$.
\item $L6^{+ve}\ \donear\ L6^{-ve}$.
\item $L8^{+ve}\ \donear\ L8^{-ve}$. \qquad \textcolor{blue}{$\blacksquare$}
\end{compactenum}
\end{example}
 	
\begin{figure}[!ht]
	\centering
	\includegraphics[width=50mm]{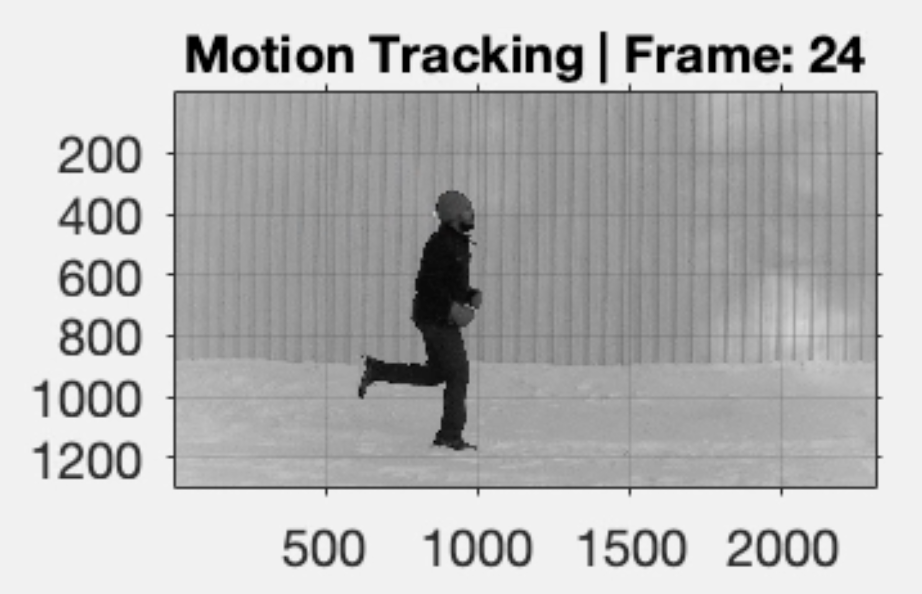}\quad\includegraphics[width=50mm]{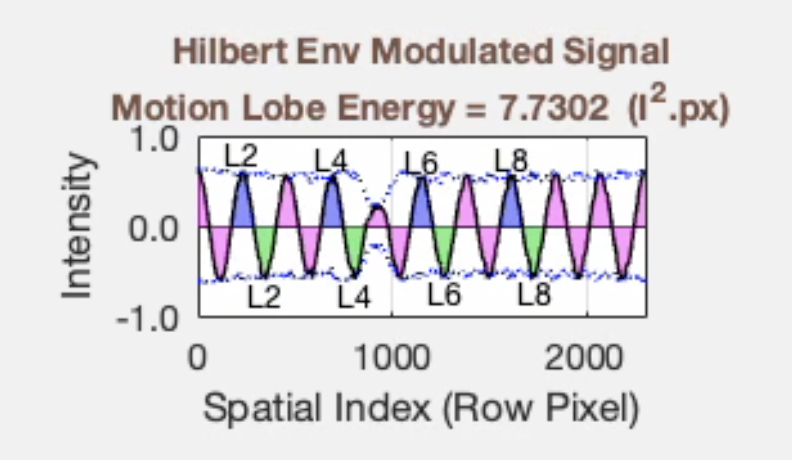}
	\caption{Source of Hilbert energy lobes in Table~\ref{table:runner24}}
	\label{fig:fr24Ediss}
\end{figure} 

The sample lobe areas (aka motion energy regions) in Table~\ref{table:runner24} (see Figure~\ref{fig:fr24Ediss}) and in Table~\ref{table:runner26} (see Figure~\ref{fig:fr26Ediss}) are a source of important relaxed proximities between Hilbert lobes in terms of lobe energy.  Lobe energy proximities between frames serve as an indication of the stability of system energy.  Relaxed proximities between IR frames are considered in Example~\ref{ex:fr2426}.

A check on relaxed proximities between Hilbert lobes in terms of lobe energy is considered next.

The sample lobe areas (aka motion energy regions) in Table~\ref{table:runner24} (see Figure~\ref{fig:fr24Ediss}).

\begin{table}[htbp]%\label{table:runner24}
\centering
\caption{Lobe Energy for Runner in IR Video Frame 24}
\begin{tabular}{|c|c|c|c|}
\hline
\textbf{Lobe} & \textbf{+ve Lobe Areas} & \textbf{-ve Lobe Areas} & \textbf{Energy Dissipation} \\
           & \textbf{(+ve lobe)}                 & \textbf{(-ve lobe)}             & \boxed{E_{diss}}         \\
\hline
L2  & 38.7384 & 36.3319 & 2.4065 \\
\hline
L4  & 34.5799 & 35.0868 & 0.5069 \\
\hline
L6  & 32.8082 & 33.3491 & 0.5409 \\
\hline
L8 & 33.0718 & 33.5572 & 0.4854 \\
\hline
\end{tabular}
\label{table:runner24}
\end{table} 

\begin{figure}[!ht]
	\centering
	\includegraphics[width=50mm]{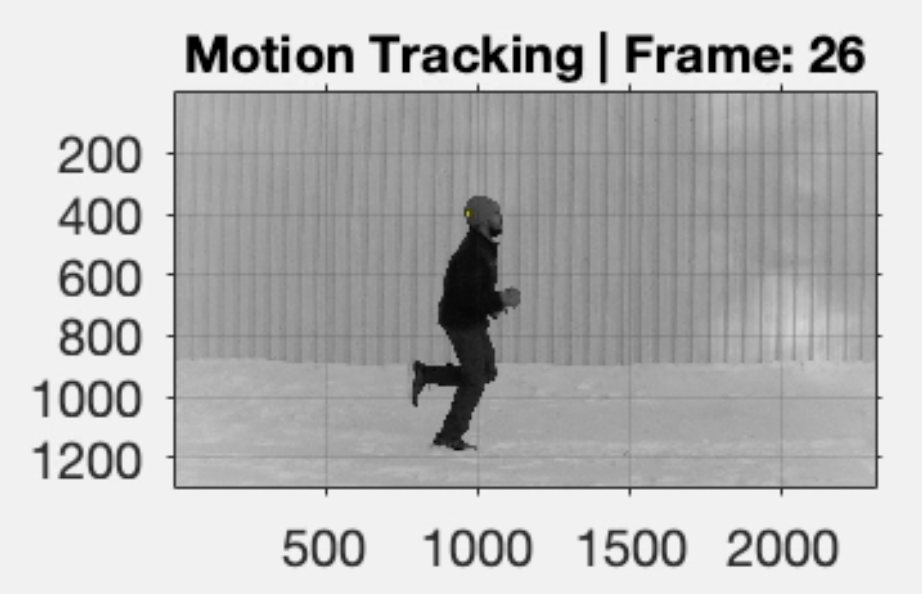}\quad\includegraphics[width=50mm]{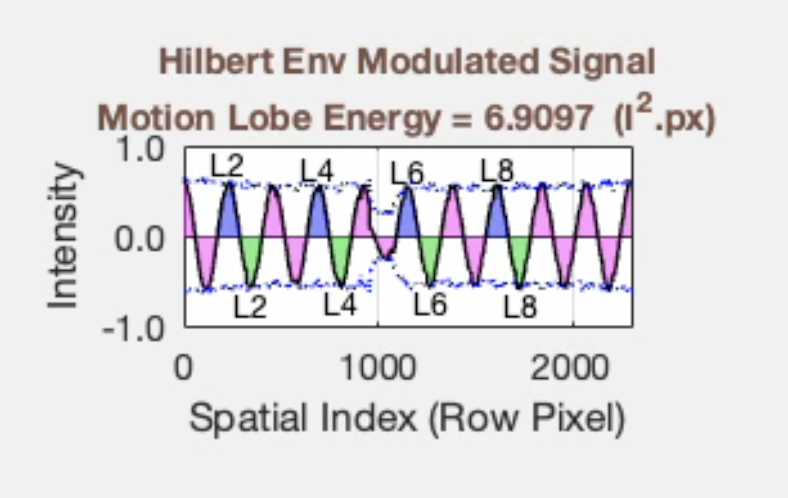}
	\caption{Source of Hilbert energy lobes in Table~\ref{table:runner26}}
	\label{fig:fr26Ediss}
\end{figure} 
  
\begin{table}[htbp]%\label{table:runner26}
\centering
\caption{Lobe Energy for Runner in IR Video Frame 26}
\begin{tabular}{|c|c|c|c|}
\hline
\textbf{Lobe} & \textbf{+ve Lobe Areas} & \textbf{-ve Lobe Areas} & \textbf{Energy Dissipation} \\
           & \textbf{(+ve lobe)}                 & \textbf{(-ve lobe)}             & \boxed{E_{diss}}         \\
\hline
L2  & 39.0058 & 36.4091 & 2.5967 \\
\hline
L4  & 34.3714 & 35.023 & 0.6516 \\
\hline
L6  & 32.8635 & 33.424 & 0.5605 \\
\hline
L8 & 33.1931 & 34.1811 & 0.988 \\
\hline
\end{tabular}
\label{table:runner26}
\end{table}	
 	
\begin{example}\label{ex:fr2426}{\bf (Relaxed Proximities Between Frames)}
Let $\epsilon = 0.2$ for the dynamical system represented by motion $m(t)$ in Figure~\ref{fig:fr24Ediss} and motion $m(t')$ in Figure~\ref{fig:fr26Ediss}.  Also, let\
\begin{center} 
\boxed{\mbox{(lobe) }L^{Tablek}_{k}, \mbox{(energy) }E^{Tablek}_{\pm L_k}, k\in\left\{19,24,26\right\}, n\in\left\{2,4,6,8\right\}
}
\end{center}  
Observe\\
\begin{compactenum}[1$^o$]
\item $+veL^{Table24}_{4}\ \donear\ +veL^{Table26}_{4}$, since\\
$\abs{E^{Table24}_{+L_4}-E^{Table26}_{+L_4}} =
\abs{34.5799-34.3714} = 0.2085 < \epsilon$.\\ 
\item $-veL^{Table24}_{4}\ \donear\ -veL^{Table26}_{4}$, since\\
$\abs{E^{Table24}_{+L_4}-E^{Table26}_{+L_4}} =
\abs{35.0868-35.023} = 0.0638 < \epsilon$.\\ 
\item $\abs{E^{Table24}_{+L_6}-E^{Table26}_{+L_6}} =
\abs{32.8082-32.8635} = 0.0553 < \epsilon$.\\ 
\item $-veL^{Table24}_{4}\ \donear\ -veL^{Table26}_{4}$, since\\
$\abs{E^{Table24}_{-L_6}-E^{Table26}_{-L_6}} =
\abs{33.3491-33.424} = 0.0749 < \epsilon$. \\
\end{compactenum}
Hence, $E_{m(t)}\ \donear\ E_{m(t')}$.
From Theorem~\ref{theorem:lowEdiss}, system $S$ at times $t$ and $t'$ has low energy dissipation. \qquad \textcolor{blue}{$\blacksquare$}
\end{example}

\end{document}